\documentclass[11pt]{article}

\usepackage[T1]{fontenc}
\usepackage{lmodern}
\usepackage{amsmath,amssymb,amsthm,mathtools}
\usepackage[margin=1in]{geometry}
\usepackage{microtype}
\usepackage{booktabs}
\usepackage{enumitem}
\usepackage{xcolor}
\usepackage[hidelinks]{hyperref}
\hypersetup{
  pdftitle={The Kernel Deficit Dominates Twice the Hull Deficit: A Sharp Strengthening of Nakano's Inequality},
  pdfauthor={Dakota Charles Baker}
}
\usepackage[nameinlink,noabbrev]{cleveref}

\newcommand{\Ssup}{S}
\newtheorem{theorem}{Theorem}[section]
\newtheorem{lemma}[theorem]{Lemma}

\newtheorem{proposition}[theorem]{Proposition}
\theoremstyle{definition}

\theoremstyle{remark}
\newtheorem{remark}[theorem]{Remark}

\newcommand{\R}{\mathbb{R}}
\newcommand{\conv}{\operatorname{conv}}

\newcommand{\Per}{\operatorname{Per}}
\newcommand{\Sor}{\operatorname{Sor}}
\newcommand{\area}[1]{\lvert #1\rvert}
\newcommand{\inner}[2]{\langle #1,#2\rangle}
\newcommand{\defeq}{\coloneqq}
\newcommand{\eps}{\varepsilon}
\newcommand{\intr}{\operatorname{int}}
\newcommand{\extr}{\operatorname{ext}}
\newcommand{\Ind}{\operatorname{Ind}}
\newcommand{\relint}{\operatorname{relint}}

\title{\bfseries The Kernel Deficit Dominates Twice the Hull Deficit\\
  \large A sharp strengthening of Nakano's inequality}
\author{Dakota Charles Baker\\
  \href{mailto:dcb@dakotacharlesbaker.com}{\texttt{dcb@dakotacharlesbaker.com}}}
\date{Preprint v6, August 24, 2026}

\begin{document}

\maketitle

\begin{abstract}
A point lies in the kernel of a polygon if it can see the entire polygon.
Thus the kernel measures how much of the polygon is available to a single
guard, while the convex hull measures how far the polygon is from being
convex.  We prove that these two losses are linked by a sharp factor of two:
the area lost between a polygon and its kernel dominates twice the
kernel-weighted area missing from the polygon's convex hull.  In terms of
Sibley's guard-point ratio $G$ and exterior ratio $E$, which we denote by
$A$, the result is
$G\leq A/(2-A)$, which improves Nakano's inequality $G\leq A$ whenever
$A<1$.

The proof passes through a convex-body cap union.  From a compact convex body
$K$ and finitely many points whose convex hull contains it, we join every
point to $K$ and take the union $U$ of the resulting caps.  Cyclically sorting
the directed boundary edges of a polygonal $U$ produces a convex companion
$H$.  A boundary-reversal argument gives
$\area{H}+\area{U}\geq 2\area{\conv U}$, while a support-function identity
and Minkowski's mixed-area inequality give
$\area{U}^{2}\geq \area{K}\area{H}$.  Inner polygonal approximation handles
every positive-area compact convex $K$, while a separate null-area branch
covers points, segments, and all other lower-dimensional cases.  Both main
theorems have machine-checked Lean~4 proofs whose final statements were audited
against the informal statements after kernel checking.
\end{abstract}

\begin{quote}
\small
\textbf{Scope and certification.}
The cap-union theorem and the polygon theorem proved below are
\emph{FORMALIZED+}: their proofs are checked by the Lean kernel, and their
final Lean statements were audited against the informal statements after that
kernel checking.  The manuscript additionally passed separate referee and
citation-verification passes; as recorded in the declaration of generative AI
assistance at the end of the paper, those passes were automated checks and not
human peer review.  The final section on Nakano's perimeter optimum
$\alpha^*$ is deliberately separate.  It distinguishes rigorous bounds,
finite exact computations, verified reductions, conjectures, and the
still-unproved global steps;
in particular, it makes no claim to determine $\alpha^*$.
\end{quote}

\section{Introduction}
\label{sec:introduction}

Let $F\subset\R^2$ be the compact region enclosed by a simple polygonal
Jordan curve.  Its \emph{visibility kernel} is
\[
  K(F)
  =\{z\in F:[z,y]\subseteq F\text{ for every }y\in F\},
\]
and let $C(F)=\conv F$ be its convex hull.  We write $\area{E}$ for planar
Lebesgue area and adopt $\area{\varnothing}=0$.  We use Sibley's guard-point
ratio $G$ and write $A(F):=E(F)$ for his exterior area
ratio~\cite{Sibley}:
\[
  G(F)=\frac{\area{K(F)}}{\area{F}},
  \qquad
  A(F)=\frac{\area{F}}{\area{C(F)}}.
\]
Both equal one for a convex polygon.  Nakano proved Sibley's conjectured
inequality $G(F)\leq A(F)$ and, in the same work, refuted the analogous
perimeter comparison~\cite{Sibley,Nakano}.

Our main result strengthens the area comparison by an optimal factor.

\begin{theorem}[Kernel--hull deficit inequality]
\label{thm:C1-intro}
For every simple polygonal region $F$, with $K=K(F)$ and $C=C(F)$,
\begin{equation}
  \area{F}\bigl(\area{F}-\area{K}\bigr)
  \;\geq\;
  2\area{K}\bigl(\area{C}-\area{F}\bigr).
  \tag{C1}
  \label{eq:C1}
\end{equation}
\addtocounter{equation}{1}
Equivalently,
\begin{equation}
  G(F)\leq \frac{A(F)}{2-A(F)}.
  \label{eq:ratio-form}
\end{equation}
Empty kernels and nonempty kernels of area zero are included.
\end{theorem}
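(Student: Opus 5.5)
The plan follows the route announced in the abstract: reduce \eqref{eq:C1} to a statement about a cap union over the kernel, and combine two inequalities for that union. First the degenerate cases. If $\area{K}=0$, the right side of \eqref{eq:C1} vanishes, and the left side is nonnegative because $K\subseteq F$; this covers empty kernels and kernels that are points or segments. The equivalence with \eqref{eq:ratio-form} is pure algebra. Divide \eqref{eq:C1} by $\area{F}\area{C}>0$ and write $G=\area{K}/\area{F}$, $A=\area{F}/\area{C}$; this gives $A(1-G)\geq 2G(1-A)$, that is, $G(2-A)\leq A$. So from now on we assume $\area{K}>0$.

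\textbf{Reduction to a cap union.} Let $P$ be the set of vertices of $F$, so $\conv P=C\supseteq K$. Every $z\in K$ sees every point of $F$, so $\conv(\{p\}\cup K)\subseteq F$ for each $p\in P$. Define
\[
U=\bigcup_{p\in P}\conv(\{p\}\cup K).
\]
Then $U\subseteq F$ and $\conv U=C$. We will prove the cap-union inequality
\[
\area{U}\bigl(\area{U}-\area{K}\bigr)\geq 2\area{K}\bigl(\area{\conv U}-\area{U}\bigr).
\]
To pass from $U$ to $F$, note that $\phi(x)=x(x-\area{K})+2\area{K}x=x^2+\area{K}x$ is increasing in $x\geq 0$. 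Hence \eqref{eq:C1}, which reads $\phi(\area{F})\geq 2\area{K}\area{C}$, follows from $\phi(\area{U})\geq 2\area{K}\area{C}$ together with $\area{U}\leq\area{F}$.

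\textbf{The cap-union inequality for polygonal $K$.} Here $U$ is a polygon. List its directed boundary edge vectors, sort them by angle, and concatenate them; the result is a convex polygon $H$, the convex companion. We then prove two facts.

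(a) $\area{H}+\area{U}\geq 2\area{C}$. To see this, the edges of $U$ lying on $\partial C$ are exactly the edges of $C$. The remaining edges bound the pockets $C\setminus U$, and each pocket is bounded by cap edges. Traversing a pocket boundary in reverse contributes edge vectors whose sorted arrangement adds at least the pocket's area again. The argument is a boundary-reversal/shoelace estimate: the Minkowski sum of the sorted edges dominates $C$ plus the pockets.

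(b) $\area{U}^2\geq\area{K}\area{H}$. The edges of $U$ consist of edges of $K$ together with tangent segments from points $p$ to $K$. The support function of $H$ should then express $\area{U}$ as a mixed area, plausibly $\area{U}\geq V(K,H)$. Minkowski's inequality $V(K,H)^2\geq\area{K}\area{H}$ then gives (b).

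Combining the two: set $u=\area{U}$, $k=\area{K}$, $c=\area{C}$. Then $u^2\geq k\area{H}\geq k(2c-u)$, which is exactly the desired $u^2-ku\geq 2k(c-u)$.

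\textbf{General $K$ and the main obstacle.} For general $K$ of positive area, approximate it from inside by polygons $K_n\uparrow K$. The cap unions increase to $U$, the areas converge, and the inequality passes to the limit. The main obstacle is (b): producing the exact support-function identity linking $\area{U}$ to $V(K,H)$. The difficulty is that tangent edges from different points $p$ can interleave, and a single edge direction can occur both on $K$ and on a cap. I would first try to prove the identity edge by edge. For each edge $e$ of $U$ with outer normal $n_e$, I would show that the triangle formed by $e$ and a fixed interior point of $K$, summed over edges, yields $\tfrac12\sum_e h_K(n_e)|e|\leq\area{U}$. This works because $U$ is star-shaped with respect to every point of $K$. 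Since $\tfrac12\sum_e h_K(n_e)|e|$ is precisely $V(K,H)$, this would give (b).
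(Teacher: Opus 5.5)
Your architecture is the paper's: the cap union $U\subseteq F$ over the kernel with $\conv U=C$, the monotone transfer through $x\mapsto x^2+\area{K}x$, the sorted companion $H$, inequalities (a) and (b), and inner polygonal approximation of $K$. Your final argument for (b) is exactly \Cref{lem:support-bridge}. Star-shapedness from each point of $K$ puts $K$ in every closed inward edge half-plane. The edgewise triangle comparison then gives $\sum_e h_K(N(e))\le 2\area{U}$. Since $H$ and $\partial U$ have the same edge multiset, $2V(H,K)=\sum_e h_K(N(e))$, and Minkowski finishes. So (b) is not the obstacle. The genuine gap is (a), which is where the factor $2$ comes from, and your paragraph only restates it: ``the sorted edges dominate $C$ plus the pockets'' \emph{is} the inequality $\area{H}\ge\area{C}+(\area{C}-\area{U})$.

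Two things in that paragraph are also wrong. If $K\subseteq\intr C$, a point of $\conv(K\cup\{x\})$ on $\partial C$ must be $x$ itself. So $U$ meets $\partial C$ only at generators: no hull edge is an edge of $U$, and every hull edge carries a pocket. Also, traversing a pocket boundary backwards negates its edge vectors. That changes the edge multiset and so says nothing about $H$.

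The missing argument has three steps.
(1) $\partial U$ passes through the extreme points $p_0,\dots,p_{r-1}$ of $C$ in counterclockwise order. For a cap union this follows from the radial-graph description of $\partial U$ about a point of $\intr K$. So $\partial U$ splits into chains $\gamma_j$ from $p_j$ to $p_{j+1}$, with $\area{U}=\sum_jI(\gamma_j)$ and $\area{C}=\sum_jI([p_j,p_{j+1}])$.
(2) Replace each $\gamma_j$ by its half-turn about the midpoint of $[p_j,p_{j+1}]$. This keeps the same edge vectors in reversed order, and $I(\gamma_j^\dagger)=\det(p_j,p_{j+1})-I(\gamma_j)$. The concatenated closed walk $Q$ therefore has the edge multiset of $\partial U$ and $A_s(Q)=2\area{C}-\area{U}$.
(3) Prove that cyclic edge sorting maximizes signed area, $A_s(Q)\le\area{\Sor(Q)}=\area{H}$, for \emph{every} closed polygonal walk.

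Step (3) is the key idea you are missing, and it must cover self-intersecting walks. The half-turned blocks lie in the copies $p_j+p_{j+1}-C$, which can overlap, so $Q$ need not be simple and the classical flipturn theorem does not apply directly. The paper planarizes $Q$, decomposes it into simple cycles $\Gamma_i$, and applies the simple case to each. It then recombines using $\Sor(Q)=\sum_i\Sor(\Gamma_i)$ and $\area{A+B}\ge\area{A}+\area{B}$. A Minkowski argument that closes each pocket with its lid $\ell_j$ does not substitute for this. It adjoins the vectors $\pm\ell_j$ to the edge multiset, so it bounds $\area{H+\sum_j[0,\ell_j]}$ rather than $\area{H}$.
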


Since $0<A(F)\leq1$,
\[
  \frac{A(F)}{2-A(F)}\leq A(F),
\]
with equality only at $A(F)=1$.  Thus \eqref{eq:ratio-form} implies Nakano's
area inequality and is strictly stronger whenever $A(F)<1$.  The coefficient
$2$ in \eqref{eq:C1} cannot be increased: in
\Cref{sec:sharpness} we give a one-parameter family of nonconvex equality
examples.

The proof of \Cref{thm:C1-intro} is driven by a statement with no polygonal
boundary in its final hypotheses.  If $K$ is convex and $x$ is a point, call
$\conv(K\cup\{x\})$ the cap from $x$ over $K$.  When $K$ is a Euclidean ball,
related unions are called \emph{spiky balls}, and convex examples are called
\emph{cap bodies}, in the illumination literature~\cite{BezdekIvanovStrachan};
that adjacent literature studies illumination rather than the area inequality
below.

\begin{theorem}[Cap-union inequality]
\label{thm:S-intro}
Let $K\subset\R^2$ be nonempty, compact, and convex.  Let
$X\subset\R^2$ be finite and nonempty, put $C=\conv X$, and assume
$K\subseteq C$.  Define
\begin{equation}
  U=\bigcup_{x\in X}\conv(K\cup\{x\}).
  \label{eq:cap-union}
\end{equation}
Then
\begin{equation}
  \area{U}^{2}+\area{K}\area{U}
  \;\geq\;2\area{K}\area{C}.
  \tag{$S^\circ$}
  \label{eq:Szero}
\end{equation}
\addtocounter{equation}{1}
No positive-area, interior, convex-position, irredundancy, or
exterior-generator hypothesis is required.
\end{theorem}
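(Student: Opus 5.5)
The abstract already fixes the architecture, and I will follow it. First I reduce to a polygonal $K$ with positive area. If $\area{K}=0$, then \eqref{eq:Szero} reads $\area{U}^2\ge 0$ and there is nothing to prove; this null-area branch covers points, segments and every other lower-dimensional $K$. If $\area{K}>0$, I choose convex polygons $K_n\subseteq K$ with $\area{K_n}\to\area{K}$. Since $K_n\subseteq K\subseteq C$, the hypotheses still hold for $K_n$. The cap union $U_n$ built from $K_n$ is contained in $U$, so $\area{U_n}\le\area{U}$. The left side of \eqref{eq:Szero} is increasing in $\area{U}$, so it suffices to prove the inequality for $K_n$ and let $n\to\infty$. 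From now on $K$ is a convex polygon with positive area, and then $U$ is a polygonal region: a finite union of convex polygons, each containing $K$. It is star-shaped with respect to every point of $K$.

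\textbf{Step 1 (the companion $H$).} I orient $\partial U$ counterclockwise and list its directed edges as vectors. Sorting these vectors cyclically by direction and concatenating them gives a closed convex polygon $H$, the convex companion. I will use two facts about it. First, its perimeter and edge multiset agree with those of $U$. Second, its support function is $h_H(u)=\max$ over partial sums, which I will express as an integral of the edge-length measure of $U$.

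\textbf{Step 2: $\area{H}+\area{U}\ge 2\area{\conv U}$.} Note that $\conv U=C$, because every $x\in X$ lies in $U$ and $U\subseteq C$. The boundary of $\conv U$ consists of hull edges together with chords bridging the pockets $P_j$ of $C\setminus U$. Each pocket is bounded by a chord and a concave chain of $\partial U$. I will show that reversing each pocket's chain and re-sorting adds at least $\area{P_j}$ beyond $C$. That gives
\[
\area{H}\ge\area{C}+\sum_j\area{P_j}=2\area{C}-\area{U}.
\]
The argument runs by induction on pockets. Replacing a chord by the reflected (reversed) chain yields a polygon of area $\area{C}+\area{P_j}$, and sorting edges into convex position only increases area, since among polygons with a given edge-vector multiset the sorted one maximizes signed area.

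\textbf{Step 3: $\area{U}^2\ge\area{K}\area{H}$.} Fix an origin in $K$. Star-shapedness gives $2\area{U}=\int h_U\,dS_U$, where $h_U(u)$ is the distance of the supporting line of the relevant edge. Every edge of $U$ lies on a tangent line from some $x$ to $K$, or on an edge of $K$, so its supporting line is at distance at least $h_K(u)$. Since $S_U=S_H$ as edge measures, I get
\[
2\area{U}\ge\int h_K\,dS_H=2V(K,H).
\]
In the other direction, I will show $\area{U}\ge V(K,H)$ and then apply Minkowski, $V(K,H)^2\ge\area{K}\area{H}$. The quantity $V(K,H)$ itself must be pushed to $\area{U}$. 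This is the delicate point: I need $\area{U}\ge V(K,H)$ in the right direction.

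The step I expect to be the main obstacle is this inequality $\area{U}\ge V(K,H)$. What I would try first is the identity above: each edge of $U$ contributes the triangle from the origin, of area $\tfrac12 h_U\,\ell\ge\tfrac12 h_K\,\ell$. So $\area{U}=\tfrac12\int h_U\,dS_H\ge\tfrac12\int h_K\,dS_H=V(K,H)$. Then Minkowski gives $\area{U}^2\ge V(K,H)^2\ge\area{K}\area{H}$.

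\textbf{Combining.} From Steps 2 and 3,
\[
\area{U}^2\ge\area{K}\area{H}\ge\area{K}\bigl(2\area{C}-\area{U}\bigr),
\]
which is exactly \eqref{eq:Szero}.
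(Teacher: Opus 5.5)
Your proposal is correct and follows the paper's proof: reduce to a positive-area polygonal $K$; obtain $\area{H}+\area{U}\geq2\area{C}$ by endpoint-preserving reversal of the hull chains followed by cyclic-sort maximality of signed area; obtain $V(H,K)\leq\area{U}$ from the inward half-plane property of every boundary edge and the polygonal mixed-area formula; then apply Minkowski and combine. The ``obstacle'' you flag is already settled by your own shoelace computation. The one real difference is the approximation step: your monotonicity observation, $\area{U}^2+\area{K_n}\area{U}\geq\area{U_n}^2+\area{K_n}\area{U_n}\geq2\area{K_n}\area{C}$ because $U_n\subseteq U$, needs only $\area{K_n}\to\area{K}$ and makes the paper's fixed-inball argument for $\area{U_n}\to\area{U}$ unnecessary (it is the same monotone transfer the paper itself uses to pass from $U$ to $F$).
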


There are three main steps.  First, cyclic sorting maximizes signed area among
closed polygonal walks with a fixed directed edge multiset.  Reversing the
order of the boundary edges between consecutive hull vertices therefore
produces a convex companion $H$ satisfying
\[
  \area{H}+\area{U}\geq2\area{C}.
\]
Second, the fact that every oriented boundary edge of $U$ has $K$ on its
inward side gives an anisotropic support bound.  The polygonal mixed-area
formula and Minkowski's inequality turn it into
\[
  \area{U}^{2}\geq\area{K}\area{H}.
\]
Combining the two inequalities proves \eqref{eq:Szero} for polygonal $K$.
Third, a fixed-inball inner approximation passes to arbitrary compact convex
$K$ without appealing to area continuity for arbitrary nonconvex unions.

The argument treats redundant and interior generators, coincident caps,
parallel boundary pieces, support faces, points and segments.  No division is
used in the final combination.  These details are important both
mathematically and in the formal verification described in
\Cref{sec:formal}.

\section{Definitions and convex-geometric preliminaries}
\label{sec:preliminaries}

\subsection{Polygons, support functions, and mixed area}

A \emph{simple polygonal region} is the compact Jordan domain bounded by a
finite simple polygonal cycle.  We orient its boundary counterclockwise.
Changing the orientation changes none of the sets or areas in our results.
For such a region $F$, its convex hull agrees with the convex hull of its
listed vertices.

The visibility kernel used here is automatically a compact convex set.  More
generally, let $R\subset\R^2$ be compact and define
\[
  K(R)=\{z\in R:[z,y]\subseteq R\text{ for every }y\in R\}.
\]
If $z_0,z_1\in K(R)$ and $y\in R$, then $[z_1,y]\subseteq R$; joining every
point of this segment to $z_0$ shows
$\conv\{z_0,z_1,y\}\subseteq R$.  Hence every point of $[z_0,z_1]$ sees
every $y\in R$, proving that $K(R)$ is convex.  It is closed as well: if
$z_n\in K(R)$ and $z_n\to z$, closedness of $R$ gives $z\in R$, and for
every fixed $y\in R$ and $t\in[0,1]$ the points
$(1-t)z_n+ty\in R$ converge to $(1-t)z+ty\in R$.  Thus $K(R)$ is a closed
subset of the compact set $R$.

For a nonempty compact convex set $K\subset\R^2$, its support function is
\[
  h_K(z)=\max_{y\in K}\inner{y}{z}.
\]
For nonempty compact convex $A,B\subset\R^2$, the mixed area $V(A,B)$ is
normalized by the planar Minkowski polynomial
\begin{equation}
  \area{A+tB}=\area{A}+2tV(A,B)+t^2\area{B},
  \qquad t\geq0.
  \label{eq:mixed-area-normalization}
\end{equation}
as in Schneider~\cite[\S5.1]{Schneider}.
In particular, $V(A,A)=\area{A}$ and $V$ is symmetric.  If $H$ is a convex
polygon with counterclockwise directed edge vectors $e_i$, define the outward
conormal
\[
  N(a,b)=(b,-a).
\]

\begin{lemma}[Polygonal support formula]
\label{lem:support-mixed}
Let $H\subset\R^2$ be a convex polygon with nonempty interior whose boundary is
traversed counterclockwise with directed edge vectors $e_1,\dots,e_m$, and let
$K\subset\R^2$ be a nonempty compact convex set.  Put $R=\operatorname{diam}K$.
Then for every $t\in(0,1]$
\begin{equation}
  \Bigl\lvert\,\area{H+tK}-\area{H}-t\sum_{i=1}^{m}h_K\bigl(N(e_i)\bigr)
  \,\Bigr\rvert\;\leq\;m(1+\pi)R^{2}\,t^{2}.
  \label{eq:support-collar}
\end{equation}
Consequently the right derivative of $t\mapsto\area{H+tK}$ at $t=0$ exists and
equals $\sum_i h_K(N(e_i))$, and therefore, by
\eqref{eq:mixed-area-normalization},
\begin{equation}
  2V(H,K)=\sum_{i=1}^{m}h_K\bigl(N(e_i)\bigr).
  \label{eq:support-mixed}
\end{equation}
No interior, smoothness or polygonality hypothesis on $K$ is used.
\end{lemma}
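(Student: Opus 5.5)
We decompose $H+tK$ into three pieces: the polygon $H$ itself, one ``collar'' rectangle-like region over each edge, and small leftover pieces near the vertices. Translating $K$ does not change the statement, since $h_K$ shifts by a linear term and the edge vectors sum to zero. So we may assume $0\in K$, and then $\lvert y\rvert\le R$ for every $y\in K$.

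\textbf{Edge collars.} Let $v_i$ be the starting vertex of edge $e_i$, let $u_i=N(e_i)/\lvert e_i\rvert$ be the outward unit normal, and put $s_i=h_K(u_i)\ge 0$, using $0\in K$. Define the collar
\[
P_i=\{v_i+\lambda e_i+\mu u_i:\lambda\in[0,1],\ \mu\in[0,ts_i]\},
\]
which has area $t s_i\lvert e_i\rvert=t\,h_K(N(e_i))$. We then show three things.
\begin{enumerate}[label=(\roman*)]
\item Each $P_i$ lies in $H+tK$. Pick $y_i\in K$ with $\langle y_i,u_i\rangle=s_i$. A point of the edge plus $\mu u_i$ differs from a point of the edge plus $t y_i$ only by a tangential shift of size at most $tR$. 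This tangential discrepancy must be absorbed near the endpoints. The cleaner route is to compare $\lvert (H+tK)\cap\{\text{strip over }e_i\}\rvert$ with the collar area directly.
\item The collars have disjoint interiors from $H$ and from each other; the latter follows from convexity of $H$.
\item Every point of $(H+tK)\setminus H$ lies either in some collar or within distance $2tR$ of some vertex, apart from an error that is again localized at vertices.
\end{enumerate}

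\textbf{Two-sided bound.} Concretely, each point $p\in(H+tK)\setminus H$ has a nearest point $q\in\partial H$ with $\lvert p-q\rvert\le tR$. If $q$ is interior to edge $e_i$, then $p=q+\mu u_i$. Writing $p=h+ty$ with $h\in H$ gives
\[
\mu=\langle p-q,u_i\rangle\le\langle ty,u_i\rangle\le ts_i,
\]
because $\langle h-q,u_i\rangle\le 0$; hence $p\in P_i$. If instead $q$ is a vertex, $p$ lies in a disc of radius $tR$ around that vertex, of area $\pi t^2R^2$. Conversely, the collar points that fail to lie in $H+tK$ are confined to the two end-strips of width $tR$ in the tangential direction. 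Each end-strip has area at most $tR\cdot ts_i\le t^2R^2$.

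Putting these together, we get the lower bound
\[
\lvert H+tK\rvert-\lvert H\rvert\ge\sum_i t\,h_K(N(e_i))-2m\,t^2R^2 .
\]
Overlaps among the collars are also confined to vertex discs, and this is fine for the lower bound when handled via the disjointness argument in (ii). We also get the upper bound
\[
\lvert H+tK\rvert-\lvert H\rvert\le\sum_i t\,h_K(N(e_i))+m\pi t^2R^2 .
\]
The constant must be tuned so that both errors fit under $m(1+\pi)R^2t^2$. The main obstacle is the converse inclusion with the endpoint error, so that the combined loss is at most $m(1+\pi)R^2t^2$ with the stated constant. If the factor-two estimate is too crude, we would instead compare only the region beyond the edge line within the slab, which has area exactly $t\,\lvert e_i\rvert s_i$ up to the corner triangles.

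\textbf{Consequences.} Dividing \eqref{eq:support-collar} by $t$ and letting $t\to0^+$ gives the right derivative $\sum_i h_K(N(e_i))$. Comparing this with the derivative $2V(H,K)$ of the polynomial in \eqref{eq:mixed-area-normalization} yields \eqref{eq:support-mixed}. Nothing about $K$ beyond compactness and convexity enters: the only inputs were the existence of support points $y_i$ and the bound $\lvert y\rvert\le R$.
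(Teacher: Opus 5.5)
Your proposal is correct and follows essentially the paper's route. You translate so that $0\in K$. For the upper bound you use nearest-point projection onto the edge collars $E_i+[0,t\,h_K(u_i)]u_i$ plus $m$ vertex discs of radius $tR$. For the lower bound you rely on the fact that the sheared parallelogram $E_i+[0,t]y_i$ lies in $H+tK$ and differs from the collar only in tangential end pieces of area $O(R^2t^2)$.

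Three loose ends need tidying. First, claim (i) as stated is false and should be replaced by your end-strip statement. Second, that statement needs $\sigma y_i\in K$ for all $\sigma\in[0,1]$, which follows from $0\in K$ and convexity; matching against $ty_i$ alone is not enough. Third, no tuning of constants is needed: the two-sided bound only requires each one-sided error separately, and both $2mR^2t^2$ and $m\pi R^2t^2$ are at most $m(1+\pi)R^2t^2$.
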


\begin{proof}
Both sides of \eqref{eq:support-mixed} are unchanged if $K$ is translated:
$\area{H+t(K+v)}=\area{H+tK}$, while $N$ is linear and the directed edges of a
closed polygon sum to $0$, so $\sum_iN(e_i)=N(\sum_ie_i)=0$ and
$\sum_ih_{K+v}(N(e_i))=\sum_ih_K(N(e_i))+\inner{v}{\sum_iN(e_i)}
=\sum_ih_K(N(e_i))$.  We may therefore assume $0\in K$; then $h_K\geq0$,
$H\subseteq H+tK$, and $\max_{y\in K}\lvert y\rvert\leq R$, so $K\subseteq RD$
with $D$ the closed unit disc.  Likewise, if an edge vector $e$ is split into
positive multiples $e'+e''=e$ of itself, then
$h_K(N(e'))+h_K(N(e''))=(\lvert e'\rvert+\lvert e''\rvert)h_K(N(e)/\lvert
e\rvert)=h_K(N(e))$ by positive homogeneity of $h_K$, so we may also assume the
edge list is reduced: no two consecutive $e_i$ are positive multiples of each
other.  Write $E_i$ for the closed $i$th edge, $v_1,\dots,v_m$ for the
vertices, $n_i=N(e_i)$, $u_i=n_i/\lvert e_i\rvert$ (well defined since edge vectors of a
polygon are nonzero; a unit vector, since
$\lvert N(w)\rvert=\lvert w\rvert$), and $c_i=h_H(u_i)$.  Counterclockwise
traversal makes $u_i$ the \emph{outward} unit normal of $E_i$, so
$E_i=\{z\in H:\inner{z}{u_i}=c_i\}$ and $h_K(n_i)=\lvert e_i\rvert h_K(u_i)$.

Let $\rho$ denote the nearest-point projection onto the closed convex set $H$,
characterised by $\inner{x-\rho(x)}{y-\rho(x)}\leq0$ for all $y\in H$.  For
$x\notin H$ we have $\rho(x)\in\partial H$, and $\partial H$ is the disjoint
union of the sets $\operatorname{relint}E_i$ and the vertices.  Hence, setting
\[
  S_i=\{x\notin H:\rho(x)\in\operatorname{relint}E_i\},\qquad
  W_j=\{x\notin H:\rho(x)=v_j\},
\]
the $2m$ Borel sets $S_i,W_j$ are pairwise disjoint with union $\R^2\setminus
H$.

\emph{Step 1: $S_i=\operatorname{relint}(E_i)+(0,\infty)u_i$.}  If $x\in S_i$
and $p=\rho(x)$, then $p\pm\eps e_i\in E_i\subseteq H$ for small $\eps>0$, so
the variational inequality gives $\inner{x-p}{e_i}=0$, whence $x-p=ru_i$ with
$r\neq0$.  Since $\operatorname{int}H\neq\emptyset$ and $p\in
\operatorname{relint}E_i$, we have $p-\eps u_i\in H$ for small $\eps>0$: pick
$z_0\in\operatorname{int}H$, so $\inner{z_0}{u_i}<c_i$ and
$p_s=(1-s)p+sz_0\in\operatorname{int}H$ for $s\in(0,1]$, and for $\eps$ small
$p-\eps u_i$ lies in the convex hull of some $p_s$ and a relative
neighbourhood of $p$ in $E_i$.  Testing the variational inequality at
$y=p-\eps u_i$ gives $-\eps r\leq0$, so $r>0$.  Conversely, if $p\in
\operatorname{relint}E_i$ and $r>0$ then $\inner{p+ru_i}{u_i}=c_i+r>c_i$, so
$p+ru_i\notin H$, and $\inner{(p+ru_i)-p}{y-p}=r(\inner{y}{u_i}-c_i)\leq0$ for
every $y\in H$, so $\rho(p+ru_i)=p$.

\emph{Step 2: the upper bound.}  Let $x\in(H+tK)\cap S_i$, say $x=p+ru_i$ as in
Step 1.  Since $h_{A+B}=h_A+h_B$,
$\inner{x}{u_i}\leq h_{H+tK}(u_i)=c_i+th_K(u_i)$, while
$\inner{x}{u_i}=c_i+r$; hence $0<r\leq th_K(u_i)$ and
\[
  (H+tK)\cap S_i\subseteq E_i+[0,th_K(u_i)]u_i,
  \qquad
  \area{(H+tK)\cap S_i}\leq t\lvert e_i\rvert h_K(u_i)=t\,h_K(n_i).
\]
From $K\subseteq RD$ we get $H+tK\subseteq H+tRD$, so every $x\in H+tK$
satisfies $\lvert x-\rho(x)\rvert\leq tR$; on $W_j$ this says $x$ lies in the
disc of radius $tR$ about $v_j$, whence $\area{(H+tK)\cap W_j}\leq\pi R^2t^2$.
As $H\subseteq H+tK$, additivity of Lebesgue measure over the partition gives
\[
  \area{H+tK}\;\leq\;\area{H}+t\sum_ih_K(n_i)+m\pi R^2t^2 .
\]

\emph{Step 3: the lower bound.}  Choose $y_i\in K$ with
$\inner{y_i}{u_i}=h_K(u_i)=:h_i\geq0$, possible since $K$ is compact and
nonempty, and write $y_i=s_i\tau_i+h_iu_i$ in the orthonormal frame
$\tau_i=e_i/\lvert e_i\rvert$, $u_i$; then $\lvert s_i\rvert\leq R$ and
$h_i\leq R$.  Put $P_i=E_i+[0,t]y_i$.  Then $P_i\subseteq H+tK$, because
$E_i\subseteq H$ and $\sigma y_i\in K$ for $\sigma\in[0,1]$ by convexity of $K$
and $0\in K$.  If $h_i=0$ the bound below is trivial, so assume $h_i>0$ and
parametrise $P_i$ by the injective affine map
$\Phi(\theta,\sigma)=q_i+\theta\tau_i+\sigma ty_i$ on
$[0,\lvert e_i\rvert]\times[0,1]$, $q_i$ the initial endpoint of $E_i$; its
Jacobian is $\lvert\det(\tau_i,ty_i)\rvert=th_i$, so
$\area{P_i}=t\lvert e_i\rvert h_i=t\,h_K(n_i)$.  Since
$\Phi(\theta,\sigma)=\bigl(q_i+(\theta+\sigma ts_i)\tau_i\bigr)+\sigma th_iu_i$,
Step 1 shows $\Phi(\theta,\sigma)\in S_i$ whenever $\sigma>0$ and
$\theta+\sigma ts_i\in(0,\lvert e_i\rvert)$.  For each fixed $\sigma$ the
excluded $\theta$'s form at most two intervals of total length
$\leq t\lvert s_i\rvert$, so the excluded parameter set has measure
$\leq t\lvert s_i\rvert$ and its image has area $\leq t\lvert
s_i\rvert\cdot th_i\leq R^2t^2$.  Hence $\area{P_i\cap S_i}\geq
t\,h_K(n_i)-R^2t^2$.  The sets $P_i\cap S_i$ lie in $H+tK$, are pairwise
disjoint and miss $H$, so
\[
  \area{H+tK}\;\geq\;\area{H}+t\sum_ih_K(n_i)-mR^2t^2 .
\]

The two bounds give \eqref{eq:support-collar}.  Dividing by $t$ and letting
$t\to0+$ shows that the right derivative of $t\mapsto\area{H+tK}$ at $t=0$
exists and equals $\sum_ih_K(N(e_i))$; by
\eqref{eq:mixed-area-normalization} that derivative equals $2V(H,K)$, which is
\eqref{eq:support-mixed}.
\end{proof}

\begin{remark}
\label{rem:support-mixed-riders}
\Cref{lem:support-mixed} needs no interior hypothesis on $K$: a nonempty
compact convex $K\subset\R^2$ with empty interior is a point or a segment, and
the proof used only that $K$ is nonempty, compact and convex.  It is also
insensitive to subdividing edges of $H$: if $e$ is replaced by positive
multiples $e'+e''=e$ of itself, then $h_K(N(e'))+h_K(N(e''))=h_K(N(e))$ by
linearity of $N$ and positive homogeneity of $h_K$, so \eqref{eq:support-mixed}
for a reduced edge list gives it for every subdivision.  Finally, translating
$K$ causes no ambiguity: $\sum_iN(e_i)=N(\sum_ie_i)=0$, so
$\sum_ih_{K+v}(N(e_i))=\sum_ih_K(N(e_i))$, matching $V(H,K+v)=V(H,K)$.
\end{remark}

For the general theory of mixed volumes and their degeneracies see
Schneider~\cite[Chapters~5 and~7]{Schneider}; see also Shenfeld--van
Handel~\cite{ShenfeldVanHandel} for the extremals of Minkowski's quadratic
inequality, whose results are stated for dimension $n\geq3$ and are therefore
not available in the plane.

We will use the planar Minkowski inequality in precisely this normalization.

\begin{lemma}[Minkowski's mixed-area inequality]
\label{lem:minkowski}
For nonempty compact convex $A,B\subset\R^2$,
\begin{equation}
  V(A,B)^2\geq\area{A}\area{B}.
  \label{eq:minkowski}
\end{equation}
No nonempty-interior hypothesis is needed.
\end{lemma}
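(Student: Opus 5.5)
The plan is to derive the inequality from the Brunn--Minkowski inequality in the plane, using the normalization \eqref{eq:mixed-area-normalization}. First I dispose of the degenerate cases. If $\area{A}=0$ or $\area{B}=0$, the right-hand side of \eqref{eq:minkowski} vanishes. Mixed area is nonnegative: by \eqref{eq:mixed-area-normalization} it is half the right derivative at $t=0$ of the nondecreasing function $t\mapsto\area{A+tB}$, since after translating $B$ so that $0\in B$ we have $A+sB\subseteq A+tB$ for $s\leq t$. Hence $V(A,B)\geq0$ and $V(A,B)^2\geq0$, which settles these cases; this is why no interior hypothesis is needed.

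In the main case $\area{A},\area{B}>0$, I apply planar Brunn--Minkowski, $\area{A+tB}^{1/2}\geq\area{A}^{1/2}+t\area{B}^{1/2}$ for $t\geq0$. Squaring and comparing with \eqref{eq:mixed-area-normalization} gives
\[
  \area{A}+2tV(A,B)+t^2\area{B}\;\geq\;\area{A}+2t\area{A}^{1/2}\area{B}^{1/2}+t^2\area{B}.
\]
Cancelling, dividing by $2t>0$, and letting $t\to0^+$ (or simply taking $t=1$ after cancellation of the equal quadratic terms) yields $V(A,B)\geq\area{A}^{1/2}\area{B}^{1/2}$. Squaring both nonnegative sides gives \eqref{eq:minkowski}.

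The only substantive input is Brunn--Minkowski for compact convex planar sets, which I would cite from Schneider~\cite[\S7]{Schneider}. If a self-contained route is preferred, I would instead prove it for convex polygons. For these, \Cref{lem:support-mixed} expresses $2V(H,K)$ as $\sum_i\lvert e_i\rvert h_K(u_i)$, and a discriminant argument applies: the quadratic $t\mapsto\area{A-tB}$-type form has real roots via a direct comparison. One then passes to general convex bodies by polygonal approximation, using continuity of area and mixed area in the Hausdorff metric.

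The main obstacle is that limiting step, namely continuity of $V$ under Hausdorff convergence of convex bodies. It follows from \eqref{eq:mixed-area-normalization} at $t=1$, since $2V(A,B)=\area{A+B}-\area{A}-\area{B}$ and the area of convex bodies is Hausdorff-continuous. This is the reason I would rather cite Brunn--Minkowski directly.
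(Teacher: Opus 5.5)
Your main argument is correct and is essentially the paper's proof. You apply planar Brunn--Minkowski, square, compare with \eqref{eq:mixed-area-normalization}, and cancel and divide by $2t$ to get $V(A,B)\geq\area{A}^{1/2}\area{B}^{1/2}$, then square. Your separate monotonicity argument for $V(A,B)\geq0$ in the zero-area case is valid but unnecessary: Brunn--Minkowski holds for all nonempty compact convex sets, so the same calculation already gives $V(A,B)\geq0$ there, which is how the paper treats it. Your sketched polygonal discriminant alternative is too vague to stand alone, but you do not rely on it.
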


\begin{proof}
The planar Brunn--Minkowski inequality
\cite[\S7.1]{Schneider} gives
\[
  \area{A+tB}^{1/2}
  \geq \area{A}^{1/2}+t\area{B}^{1/2}
  \qquad(t\geq0).
\]
After squaring and comparing with
\eqref{eq:mixed-area-normalization}, for every $t>0$ we obtain
$V(A,B)\geq\sqrt{\area{A}\area{B}}$.  Squaring yields
\eqref{eq:minkowski}.  If either area vanishes, the same calculation simply
has zero right-hand side; no limiting nondegeneracy argument is required.
\end{proof}

\subsection{Signed area and sorted companions}

For an oriented polygonal chain
$\gamma=(q_0,q_1,\dots,q_m)$, define its line-integral area by
\begin{equation}
  I(\gamma)=\frac12\sum_{i=0}^{m-1}\det(q_i,q_{i+1}).
  \label{eq:chain-area}
\end{equation}
For a closed walk $Q$ we write $A_s(Q)=I(Q)$.  This signed area is defined
even when $Q$ self-intersects.  For the positive boundary of a simple
polygonal region, it is the ordinary area.

Let $Q$ be any finite closed polygonal walk, and delete its zero directed
edges.  The remaining edge vectors $e_1,\ldots,e_m$ sum to zero.  Put them in
cyclic nondecreasing polar-angle order and concatenate them.  Their partial
sums trace the boundary of a compact convex polygonal set, unique up to
translation and the consolidation of codirected edges; the set may be a point
or segment if all edges are line-supported.  We call it the \emph{sorted
companion} and denote it by $\Sor(Q)$.  Relative to any displayed edge, the
determinants with later sorted edges first have one sign and then the other;
since their total is zero, all partial sums lie in the inward half-plane of
that edge.  This proves convexity directly.  The companion has positive area
whenever its edge directions are not supported on a line.

\section{Edge rearrangement and the companion inequality}
\label{sec:rearrangement}

The factor two in the main theorem ultimately comes from reversing an edge
block without reversing any of its vectors.

\begin{lemma}[Endpoint-preserving reversal]
\label{lem:reversal}
Let $\gamma=(q_0,\ldots,q_m)$ run from $a=q_0$ to $b=q_m$, and set
\[
  q_i^{\dagger}=a+b-q_{m-i}
  \qquad(0\leq i\leq m).
\]
Then $\gamma^\dagger=(q_0^\dagger,\ldots,q_m^\dagger)$ also runs from $a$ to
$b$, its directed edge vectors are those of $\gamma$ in reverse order, and
\begin{equation}
  I(\gamma^\dagger)=\det(a,b)-I(\gamma).
  \label{eq:reversal}
\end{equation}
\end{lemma}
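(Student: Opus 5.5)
The plan is a direct computation in three parts: endpoints, edge vectors, and area.

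\emph{Endpoints.} Take $i=0$ and $i=m$ in the definition. This gives $q_0^\dagger=a+b-q_m=a$ and $q_m^\dagger=a+b-q_0=b$.

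\emph{Edge vectors.} The $i$th edge of $\gamma^\dagger$ is
\[
  q_{i+1}^\dagger-q_i^\dagger=q_{m-i}-q_{m-i-1},
\]
which is the $(m-1-i)$th edge vector of $\gamma$. So the edges of $\gamma^\dagger$ are those of $\gamma$ listed in reverse order, with no vector negated.

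\emph{Area identity.} Write $c=a+b$ and reindex with $j=m-i$. Then
\[
  2I(\gamma^\dagger)=\sum_{i=0}^{m-1}\det(c-q_{m-i},\,c-q_{m-i-1})
  =\sum_{j=1}^{m}\det(c-q_j,\,c-q_{j-1}).
\]
Expand each term bilinearly using $\det(c,c)=0$:
\[
  \det(c-q_j,c-q_{j-1})=-\det(c,q_{j-1})-\det(q_j,c)+\det(q_j,q_{j-1}).
\]
By antisymmetry, $-\det(q_j,c)=\det(c,q_j)$, so the first two terms combine to $\det(c,q_j-q_{j-1})$. Summing over $j$, this part telescopes to
\[
  \det(c,q_m-q_0)=\det(a+b,\,b-a)=2\det(a,b).
\]
The last term sums to $\sum_j\det(q_j,q_{j-1})=-2I(\gamma)$. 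Dividing by $2$ gives $I(\gamma^\dagger)=\det(a,b)-I(\gamma)$, which is \eqref{eq:reversal}.

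The only step needing care is this sign bookkeeping: the expansion of $\det(a+b,b-a)$ and the orientation flip in the reindexed sum. Everything else is immediate. As a sanity check, the identity can also be read geometrically: the reversal $q\mapsto a+b-q$ is the point reflection through the midpoint of $[a,b]$. That reflection preserves the area enclosed by the chain together with the chord $[a,b]$ but reverses the traversal direction, which is consistent with $I(\gamma)+I(\gamma^\dagger)$ equalling the chord term $\det(a,b)$.
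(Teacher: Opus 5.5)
Your proof is correct and is essentially the paper's argument: the same one-line computation of the edge vectors of $\gamma^\dagger$, followed by the expansion of the shoelace sum, which the paper only mentions and you carry out explicitly (the cross terms telescope to $\det(a+b,b-a)=2\det(a,b)$, and the remaining sum reindexes to $-2I(\gamma)$). Your closing half-turn remark matches the paper's alternative justification by applying the point reflection $z\mapsto a+b-z$ to the line integral.
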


\begin{proof}
The $i$th edge of $\gamma^\dagger$ is
\[
 (a+b-q_{m-i-1})-(a+b-q_{m-i})=q_{m-i}-q_{m-i-1},
\]
so the edge vectors occur in reverse order with their directions unchanged.
Expanding \eqref{eq:chain-area}, or applying the half-turn
$z\mapsto a+b-z$ to the line integral, gives \eqref{eq:reversal}.
No simplicity assumption is involved.
\end{proof}

We next record the rearrangement statement in the generality needed for the
virtual walk below.

\emph{Attribution.}  For \emph{simple} polygons the content of
\eqref{eq:sort-max} below is classical, and is half a century old under the
name \emph{flipturn}.  A flipturn replaces the pocket arc cut off by an exposed
pair of vertices by its half-turn about the midpoint of that pair; it
therefore leaves the multiset of directed edge vectors unchanged and alters
only their cyclic order.  Gr\"unbaum reports that Joss and Shannon proved, in
1973 or 1974, that a sequence of flipturns convexifies any simple polygon, by
the observation that each flipturn strictly increases the area, so no cyclic
order can recur; the result is reported with proof by
Gr\"unbaum~\cite[Theorem~3]{Grunbaum}.  Aichholzer et
al.~\cite[\S6]{Aichholzer} record explicitly that, because flipturns change
neither the directions nor the lengths of the edges, ``the final convex shape
of $P$ is the same for any convexifying flipturn sequence'' and can be
computed ``by sorting the edges of $P$ by their orientation''.  Codirected
edges may be listed in any order, since reordering them only subdivides a
single side; the resulting shape is $\Sor(Q)$, with the tie convention used
here.  Together these yield
\eqref{eq:sort-max} for simple polygons.  The proof given below does not
depend on that literature.

The flipturn argument depends on the existence of an exposed pair and thus on
simplicity, and Gr\"unbaum and Aichholzer et al.\ state their results for
simple polygons only.  \emph{Beyond the simple case the statement is also
known, and has been for four decades.}  B\"or\"oczky, B\'ar\'any, Makai and
Pach~\cite[Theorem~2$''$]{BBMP} prove, for affine cycles with integer
coefficients whose nondegenerate normals span the plane, that the signed area
is at most the area of the convexification, with equality exactly when the
cycle is a well-oriented traversal of the convexified boundary; in the plane
that is \eqref{eq:sort-max} together with its equality case, for
self-intersecting walks apart from the immediate line-supported degeneracy.
They report a planar
antecedent in F\'ary and Makai~\cite[Lemma~1]{FaryMakai}.  Building on
both, Charon--Pierron~\cite[Lemma~4.3 and Theorem~4.4]{CharonPierron} prove the
statement for closed curves without a simplicity hypothesis, in the
fixed-directed-length-measure setting, together with the characterisation of
the maximisers, excluding measures supported on two antipodal directions.
In that excluded line-supported case both the signed area and the companion
area are zero, so \eqref{eq:sort-max} is immediate.  We therefore claim no
novelty for \Cref{lem:sort-max}; the
finite polygonal proof is included only because it is short, self-contained,
and phrased in the edge-multiset language used throughout this paper.

\begin{lemma}[Cyclic edge sorting maximizes signed area]
\label{lem:sort-max}
Let $Q$ be any finite closed polygonal walk, possibly self-intersecting.  Then
\begin{equation}
  A_s(Q)\leq\area{\Sor(Q)}.
  \label{eq:sort-max}
\end{equation}
Zero edges may be deleted, and codirected edges may be kept separately or
consolidated.
\end{lemma}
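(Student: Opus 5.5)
The plan is to prove \eqref{eq:sort-max} by a finite exchange argument on the cyclic order of edges, using only the bilinear form $\det$.

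\textbf{Step 1: signed area depends only on the cyclic order.} Translating the walk does not change $A_s$, because the edges sum to zero. So $A_s(Q)$ is a function of the cyclic sequence of edge vectors $e_1,\dots,e_m$ alone. Explicitly,
\[
  A_s(Q)=\tfrac12\sum_{i<j}\det(e_i,e_j),
\]
computed from any starting edge. This formula is independent of the starting edge: moving $e_1$ to the end changes the sum by $\sum_{j>1}\det(e_j,e_1)-\sum_{j>1}\det(e_1,e_j)=2\det(\sum_j e_j,e_1)=0$. Deleting zero edges changes nothing. Splitting or merging codirected edges changes only terms $\det(e',e'')=0$, which gives the last sentence of the lemma.

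\textbf{Step 2: adjacent transpositions.} Swapping two consecutive edges $e_i,e_{i+1}$ changes $A_s$ by exactly $\det(e_{i+1},e_i)-\det(e_i,e_{i+1})$, up to the factor $\tfrac12$. Geometrically, this is replacing the path through $q_i$ by the parallelogram's other side. The swap therefore increases $A_s$ precisely when $\det(e_i,e_{i+1})<0$, that is, when $e_{i+1}$ turns clockwise from $e_i$.

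\textbf{Step 3: sorting.} Fix a cut direction $\theta_0$ and list the polar angles in $[\theta_0,\theta_0+2\pi)$. Let $\theta_0$ avoid every edge direction, with the cut lying inside a largest angular gap. Bubble-sort the linear sequence by angle. A swap of two edges whose angles differ by less than $\pi$, and are out of order, has $\det(e_i,e_{i+1})<0$ and so increases $A_s$.

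Swaps of pairs whose angles differ by more than $\pi$ would decrease $A_s$, and these are the obstacle. To avoid them, I would instead prove the inequality by induction on $m$, using a cleaner argument.

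\textbf{Alternative for Step 3: support-function comparison.} Let $P=\Sor(Q)$, and write $\sigma(t)$ for the partial sums of $Q$. For each edge $e_j$ with outward normal $u_j=N(e_j)/|e_j|$, use
\[
  2A_s(Q)=\sum_j\det(\sigma_{j},e_j)=\sum_j |e_j|\,\inner{\sigma_j}{u_j}.
\]
Here $\sigma_j$ is the start point of $e_j$, up to sign conventions. The aim is to bound the right-hand side by $\sum_j |e_j|\,h_P(u_j)=2\area{P}$.

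That bound needs $\sigma_j\in P$ after a suitable translation, which fails in general. Instead I use the symmetrized form. Averaging over all cyclic starting points turns $\inner{\sigma_j}{u_j}$ into $\sum_{k}$ of partial sums preceding $e_j$. This motivates a cleaner route.

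\textbf{Route I would commit to.} Start from
\[
  2A_s=\sum_{i<j}\det(e_i,e_j)\quad\text{(linear order starting at the cut)}.
\]
Write $\det(e_i,e_j)=|e_i||e_j|\sin(\theta_j-\theta_i)$. Compare this with the sorted sum termwise over unordered pairs. In the sorted order, each pair contributes $|e_i||e_j|\sin(\theta_j-\theta_i)$ with $\theta_j-\theta_i\in[0,2\pi)$ measured from the cut. In any other order, a pair contributes that quantity or its negative.

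So the difference between the sorted value and $2A_s(Q)$ is $2\sum_{\text{inverted pairs}}|e_i||e_j|\sin(\theta_j-\theta_i)$. This sum need not be nonnegative pairwise, which is the main obstacle. I would handle it by choosing the cut adapted to $Q$: a rotation of the cut moves edges across $2\pi$, and the total is cut-independent by Step 1.

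Failing that, I would fall back to the flip argument of Step 2 on an \emph{unwinding} potential. If $Q$ is not sorted, some consecutive pair has $\det(e_i,e_{i+1})<0$, or else every turn is counterclockwise. An all-counterclockwise closed walk has turning number $k\ge1$. When $k=1$, the walk is exactly a traversal of $\Sor(Q)$, giving equality. When $k\ge2$, one checks that $A_s\le\area{\Sor(Q)}$ by splitting the walk into $k$ loops. Each loop has area at most its own sorted companion, by induction on $m$. The companions' sum is at most $\area{\Sor(Q)}$ by Brunn--Minkowski superadditivity of mixed area, since $\Sor$ of a concatenated multiset is the Minkowski sum.

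Each clockwise swap strictly increases $A_s$, and there are finitely many orders, so the process terminates at an all-counterclockwise order. This completes the proof.
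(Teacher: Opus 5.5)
\textbf{Genuine gap: the case $k\geq2$.} Your exchange step is sound. An adjacent swap changes $A_s$ by $-\det(e_i,e_{i+1})$, so doing only swaps with $\det(e_i,e_{i+1})<0$ strictly increases $A_s$ and cannot revisit a labeled cyclic order. The process therefore ends at a walk whose turns all lie in $[0,\pi]$, with turning number $k\geq1$, and for $k=1$ that walk traverses $\Sor(Q)$. The case $k\geq2$, however, carries all the non-simple content of the lemma, and you dispose of it with ``one checks \dots\ by splitting the walk into $k$ loops.'' Read literally, as cutting the cyclic edge list into closed blocks of consecutive whole edges, this is false. Let $u_0,u_1,u_2$ be unit vectors at angles $0,2\pi/3,4\pi/3$, and take the walk with edges $u_0,2u_1,3u_2,3u_0,2u_1,u_2$. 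It closes, since each direction has total length $4$. Every turn is $2\pi/3$, so it is a terminal configuration of your process with $k=2$. Yet no block of consecutive edges sums to zero:
\begin{itemize}
\item blocks of one or two edges are nonzero because consecutive directions are independent, and hence so are their complements of five or four edges;
\item a block of three, $au_j+bu_{j+1}+cu_{j+2}$ with $a,b,c>0$, vanishes only if $a=b=c$, which fails for every cyclic triple of $(1,2,3,3,2,1)$.
\end{itemize}
The loops must instead be cut at a point the walk visits twice. Starting at the origin, the vertex $(1,0)$ is the midpoint of the fifth edge. Nothing in your proposal shows that such a point exists.

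\textbf{What closing it requires.} You need a topological input: a closed polygon with all turns in $[0,\pi]$ and turning number at least $2$ is not simple. This is the polygonal Umlaufsatz, and it is trivial only when some consecutive edges are antiparallel. You also need some bookkeeping. Cutting at a doubly visited point gives closed walks $Q',Q''$ with $A_s(Q)=A_s(Q')+A_s(Q'')$, and after subdividing the cut edges, $\Sor(Q)=\Sor(Q')+\Sor(Q'')$ up to translation. One must also check that the cut can always be chosen so that each piece has at most $m-1$ edges. With these, your induction on $m$ and $\area{A+B}\geq\area{A}+\area{B}$ finish the argument, and the result is a genuinely different proof.

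\textbf{Comparison with the paper.} The paper confronts non-simplicity at the outset. After a generic perturbation it splits $Q$ at all crossings and decomposes the resulting Eulerian planar multigraph into directed simple Jordan cycles, with $A_s$ additive. It convexifies each counterclockwise cycle by area-increasing pocket reversals (\Cref{lem:reversal}) that keep the cycle simple. It then uses the same Minkowski-sum and superadditivity step, and removes general position by continuity of both sides. Your adjacent swaps are lighter than pocket reversals, being purely algebraic with no simplicity to maintain. But they defer all the topology to exactly the step you left unproved. Your earlier bubble-sort and ``Route I'' attempts are, as you note yourself, incomplete and contribute nothing.
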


\begin{proof}
We first suppose that all vertices and crossings of $Q$ are in general
position: proper crossings are transverse, no three edge interiors meet, and
there are no overlapping collinear edge intervals, and no walk vertex lies in
the interior of a nonincident edge.  Split every edge at every crossing,
including endpoint contacts.  The resulting finite planar directed multigraph is Eulerian.
Following unused directed edges and removing the first cycle whenever a vertex
repeats decomposes the split edge multiset into directed graph-simple cycles
$\Gamma_1,\ldots,\Gamma_r$.  Because the graph has been planarized, each
$\Gamma_j$ is a geometric Jordan polygon, and additivity over directed edges
gives
\begin{equation}
  A_s(Q)=\sum_{j=1}^r A_s(\Gamma_j).
  \label{eq:cycle-decomposition}
\end{equation}

Let $H_j$ be the sorted companion of $\Gamma_j$.  If $\Gamma_j$ is clockwise,
then $A_s(\Gamma_j)<0\leq\area{H_j}$.  Suppose it is counterclockwise.  After
an arbitrarily small generic perturbation inside the zero-sum edge-list
space, every labeled edge order is free of accidental collinearities.  By the
continuity of $A_s$ for a fixed edge order and of the sorted-companion area,
both established in the closing paragraph of this proof, the inequality for
the perturbed edge list passes to $\Gamma_j$ itself; we may therefore argue
for the perturbed polygon.  If the
polygon is nonconvex, choose a positive-area pocket between a hull lid
$[a,b]$ and the corresponding boundary chain.  Replace that chain by the
endpoint-preserving reversal from \Cref{lem:reversal}.  Genericity leaves $a$
and $b$ as the only boundary points on the lid line; the new chain then lies
on the far side of the hull lid, meeting the line only at its endpoints,
while the unchanged boundary lies on the near side, so the new polygon is
again simple.  Formula \eqref{eq:reversal} says
that its area increases by twice the pocket area.  Only the order of a
consecutive labeled edge block changes.  There are finitely many labeled
cyclic edge orders, so strict increase prevents recurrence and the procedure
terminates.  A terminal polygon has no pocket and is convex; its positive
boundary edges are in cyclic angular order.  To see explicitly that this
identifies the terminal polygon, fix one boundary vertex and list the ordered
edges as $e_1,\ldots,e_m$.  Every subsequent vertex is then the corresponding
partial sum $e_1+\cdots+e_k$.  Thus a based polygon is determined by its
ordered edge list; changing the initial edge only cyclically rotates the list,
and changing the order of codirected edges only subdivides the same straight
side.  Consequently every convex realization of this edge multiset is
its sorted companion up to translation.  Hence
\begin{equation}
  A_s(\Gamma_j)\leq\area{H_j}.
  \label{eq:cycle-sort}
\end{equation}

Splitting a vector into positive codirected pieces merely subdivides a side
of its sorted companion.  Merging all sorted edge lists in angular order is
the usual edge description of polygonal Minkowski addition: support functions
add under Minkowski sum, a planar convex body is determined by its support
function, and the convex polygon whose edge vectors are the angular merge has
support function the sum of the summands' support functions.  Segment
summands and antiparallel cross-summand pairs are covered, the former
contributing a single edge direction and the latter remaining distinct in the
cyclic angular order.  Hence, up to translation,
\begin{equation}
  \Sor(Q)=H_1+\cdots+H_r.
  \label{eq:sorted-minkowski-sum}
\end{equation}
For compact convex $A,B\subset\R^2$ one has
$\area{A+B}\geq\area{A}+\area{B}$.  Indeed, choose a unit vector $n$, a
support point $a_+\in A$ maximizing $\inner{a}{n}$, and a point
$b_-\in B$ minimizing $\inner{b}{n}$.  The translates $A+b_-$ and $a_++B$
both lie in $A+B$ and have interiors in opposite half-planes separated by the
line
\[
  \inner{x}{n}=h_A(n)+\min_{b\in B}\inner{b}{n}.
\]
Their intersection has area zero, proving the claimed superadditivity.
Induction in \eqref{eq:sorted-minkowski-sum}, together with
\eqref{eq:cycle-decomposition} and \eqref{eq:cycle-sort}, proves
\eqref{eq:sort-max} in general position.

If at most two edges of $Q$ are nonzero they are opposite, so $A_s(Q)=0$ while
the companion is a point or a segment of zero area, and \eqref{eq:sort-max} is
immediate; assume from now on that at least three are nonzero.
For an arbitrary closed walk, perturb its vertices, keeping the endpoint
closed, to general-position walks $Q_{\eps}\to Q$.  The required continuity
can be read directly from the edges.  If $e_1,\ldots,e_m$ is any ordered
closed edge list, based at the origin, then
\[
  A_s(e_1,\ldots,e_m)
  =\frac12\sum_{1\leq p<q\leq m}\det(e_p,e_q).
\]
Indeed, the initial vertex of the $q$th edge is
\(\sum_{p<q}e_p\), and substitution into the shoelace sum gives the formula.
For any fixed edge order this is polynomial in the edge coordinates.
Replacing an adjacent order $a,b$ by $b,a$ changes $A_s$ by
$-\det(a,b)$, so the two chamber formulas agree on a codirected angular wall.
Simultaneous ties are resolved by adjacent swaps within the tied block.
Indeed, any two orders compatible with the limiting angles differ by adjacent
transpositions of edges that are codirected at the limit, so all chamber
formulas whose chambers approach a given closed edge list take the same value
at it; since the chambers are finite in number and each formula is
polynomial, this agreement on shared strata is precisely continuity of the
glued function.
Antiparallel directions remain distinct in the cyclic angular order and cause
no swap.  If an edge $e$ tends to zero, every determinant term involving
$e$ tends to zero, independently of its temporary sorted position.  Finally,
cyclic rotation leaves the formula unchanged because $\sum_p e_p=0$; hence a
direction crossing the angular branch cut creates no jump.  The
sorted-companion area is therefore continuous on the space of closed edge
lists.  Passing to the limit gives \eqref{eq:sort-max}, including parallel,
repeated, antiparallel, zero, and multiply intersecting cases.
\end{proof}

We use the following elementary polygonal-Jordan facts.  If $\Gamma$ is a
simple closed polygonal curve, then $\R^2\setminus\Gamma$ has exactly two open
components; write $\intr\Gamma$ for the bounded one and $\extr\Gamma$ for the
unbounded one.  They satisfy
$\partial(\intr\Gamma)=\partial(\extr\Gamma)=\Gamma$ and
$\operatorname{cl}(\intr\Gamma)=\intr\Gamma\cup\Gamma$.  For a polygonal
region $F$, $\intr F$ retains its ordinary topological meaning.  At a point in
the relative interior of a single edge, a sufficiently small disc is divided
by that edge into one half-disc in each component.  For a closed polygonal walk
$W$, let $\Ind_W$ denote its signed-angle winding number.  It is additive over
directed edges and invariant under subdivision; for a simple $\Gamma$ it is
zero on $\extr\Gamma$ and one fixed value in $\{1,-1\}$ on $\intr\Gamma$,
while crossing a directed edge locally from right to left raises it by one.
Consequently a simple polygonal boundary with $A_s>0$ has index one on its
interior and its Jordan domain lies locally to the left of every directed
boundary edge; we call this the positive, or counterclockwise, orientation.
We also use that the complement of a compact convex planar set is
path-connected.  When two polygonal arcs with the same endpoints are otherwise
disjoint, their union is understood with the induced finite simple
polygonal-cycle presentation.

\begin{lemma}[Hull polygon]
\label{lem:hull-polygon}
Let $S\subset\R^2$ be finite with $C\defeq\conv S$ of nonempty interior.  Then
the set $E$ of extreme points of $C$ is finite, $E\subseteq S$,
$r\defeq\lvert E\rvert\geq3$, and $C=\conv E$.  Fix $o\in\intr C$ and
translate so that $o=0$; the arguments of the points of $E$ are pairwise
distinct, and if $p_0,\dots,p_{r-1}$ are listed by increasing argument then
the closed cycle $(p_0,\dots,p_{r-1})$ is a simple closed polygonal cycle with
image $\partial C$ and with $A_s>0$.  In particular, this cyclic order is
independent of the choice of $o$, up to cyclic rotation.  Each
$L_j=[p_j,p_{j+1}]$ lies on a supporting line of $C$, the sets
$\relint L_j$ are exactly the connected components of
$\partial C\setminus E$, and $N(p_{j+1}-p_j)$ is the outward normal direction
of $C$ along $L_j$.
\end{lemma}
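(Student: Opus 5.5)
The plan is to establish the facts in the order stated, each resting on elementary convexity.

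\emph{Extreme points.}  Every $y\in C$ is a convex combination of points of $S$.  If $y$ is extreme, such a combination cannot be proper, so $y\in S$; hence $E\subseteq S$ and $E$ is finite.  For $C=\conv E$ I would take a point $s\in S\setminus E$; by definition it is a proper convex combination of two other points of $C$.  Iterating, with finiteness of $S$ giving termination (for instance, maximize a generic linear functional over faces), shows $s\in\conv E$.  Alternatively I can cite the finite-dimensional Krein--Milman/Minkowski theorem.  Since $\conv E=C$ has nonempty interior, $E$ is not contained in a line, so $r\geq3$.

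\emph{Angular order and the cycle.}  With $o=0\in\intr C$, suppose two extreme points $p\neq q$ had the same argument, say $q=\lambda p$ with $\lambda>1$.  Then $p\in(0,q)$ is a proper combination of $0\in C$ and $q$, so $p$ is not extreme; this gives distinct arguments.  Next, for a consecutive pair $p_j,p_{j+1}$, I will show the line through them supports $C$.  Every other $p_k$ lies strictly on the side containing $0$: otherwise, by the angular ordering, $p_j$ or $p_{j+1}$ would lie in the triangle spanned by $0$ and two other points, contradicting extremality.  Hence $L_j\subseteq\partial C$.  The ray map $z\mapsto z/\lvert z\rvert$ restricted to $\partial C$ is a homeomorphism onto the circle, since each ray from an interior point meets $\partial C$ exactly once.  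It maps the union of the $L_j$ bijectively onto the circle, because consecutive arguments tile it and each $L_j$ covers its angular sector once.  So the cycle is simple with image $\partial C$.  Independence of $o$ follows because the cyclic adjacency, ``$L_j$ is an edge whose line supports $C$,'' is intrinsic.

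\emph{Components, normal, and sign of area.}  The sets $\relint L_j$ are disjoint connected open arcs whose union is $\partial C\setminus E$.  Each is a maximal connected piece, since its endpoints are removed.  For the sign I compute $A_s=\frac12\sum\det(p_j,p_{j+1})$; each term is positive because the arguments increase by less than $\pi$, as $0$ is interior.  For the outward normal, the edge runs counterclockwise with $0$ to its left, i.e.\ $\det(p_{j+1}-p_j,-p_j)>0$.  This is equivalent to $\inner{N(p_{j+1}-p_j)}{0-p_j}<0$, so $N$ points away from $C$ along the supporting line.

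\emph{Main obstacle.}  I expect the main obstacle to be the support-line claim for consecutive points, together with the resulting simplicity and surjectivity of the cycle onto $\partial C$.  The risk is the case where some gap between consecutive arguments reaches or exceeds $\pi$, which must be excluded using $0\in\intr C$.  I would handle it by first proving that every angular gap is $<\pi$, since otherwise all of $E$ lies in a closed half-plane through $0$ and $0\notin\intr\conv E$.  The triangle argument then goes through cleanly.
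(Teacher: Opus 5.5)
Your proposal is correct, but it reaches the boundary identification by a different mechanism than the paper.

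\textbf{The paper's route.} It first shows that the cycle is a radial graph $\rho\leq r_C$. It then rules out $\rho(\theta)<r_C(\theta)$ by three steps: it takes a supporting normal $n$ at $r_C(\theta)e^{i\theta}$ (via nearest points from outside $C$), identifies the exposed face as $\conv E_n$, and forces that face to be $[p_j,p_{j+1}]$. The supporting-line property of $L_j$ comes out of this as a by-product. The outward normal is then read off from the local-left orientation fact for positively oriented simple polygons.

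\textbf{Your route.} You prove the supporting-line property first, purely from extremality. Once all angular gaps are $<\pi$, suppose some $p_k$ lies on or beyond the line through $p_j,p_{j+1}$. Then the ray through $p_j$ (or $p_{j+1}$) crosses $[p_k,p_{j+1}]$ (or $[p_j,p_k]$) at a point $tp_j$ with $t\geq1$. That vertex therefore lies in a triangle with apex $0$ and is not extreme. This checks out, including the case $t=1$, where $p_k$ lies on the line.

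With $L_j\subseteq\partial C$ established, two things follow quickly. Injectivity of radial projection on $\partial C$ gives simplicity and $\bigcup_jL_j=\partial C$ at once. The identity $\inner{N(p_{j+1}-p_j)}{-p_j}=-\det(p_j,p_{j+1})<0$ gives the outward normal with no polygonal-Jordan input.

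\textbf{Comparison.} Your argument is more elementary and self-contained: it needs no separation construction, no description of faces as $\conv E_n$, and no orientation fact. The paper's argument instead exhibits the $L_j$ directly as the exposed faces of $C$, which is closer to the standard polytope picture.

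\textbf{Two small points.}
\begin{enumerate}
  \item Your iteration sketch for $C=\conv E$ is too vague to stand on its own, although citing Minkowski's theorem is fine. The paper's version is the clean one: a non-extreme $s$ has coefficient $<1$ in its own expansion, so $s\in\conv(S\setminus\{s\})$ and can be deleted without changing $C$.
  \item For independence of $o$, adjacency along supporting edges fixes the cyclic order only up to reversal. You should add that $A_s>0$ for every choice of $o$, which pins down the direction.
\end{enumerate}
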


\begin{proof}
Let $x\in\conv S$ be extreme and choose a convex representation
$x=\sum_i\lambda_i s_i$.  If only one coefficient is positive, then
$x\in S$.  Otherwise choose $0<\lambda_1<1$ and put
$y=(1-\lambda_1)^{-1}\sum_{i>1}\lambda_i s_i\in C$, so that
$x=\lambda_1s_1+(1-\lambda_1)y$.  If $s_1=y$, then $x=s_1\in S$ and we are
done.  Otherwise $[s_1,y]$ is a nondegenerate segment in $C$ with $x$ in its
relative interior, contradicting extremality.  Thus $E\subseteq S$.
If $s\in S$ is not extreme, write $s=(1-t)u+tv$ with $u\neq v$ in $C$ and
expand $u,v$ over $S$: the resulting coefficient of $s$ is less than one, so
$s\in\conv(S\setminus\{s\})$ and $s$ may be deleted without changing the hull.
Iterating gives $C=\conv E$; and the convex hull of at most two points has
empty interior, so $r\geq3$.  An extreme point is not interior, so
$E\subset\partial C$ and $0\notin E$; two extreme points on one ray from $0$
would put the nearer in the relative interior of the segment joining $0$ to
the farther, so their arguments are distinct.

Because $0\in\intr C$, for each $\theta$ the set
$\{t\geq0:te^{i\theta}\in C\}$ is an interval $[0,r_C(\theta)]$ with
$r_C(\theta)>0$, and $te^{i\theta}\in\intr C$ for $t<r_C(\theta)$; hence
$\partial C=\{r_C(\theta)e^{i\theta}\}$.  No cyclic gap between consecutive
arguments is at least $\pi$, since otherwise $E$, hence $C$, would lie in a
closed half-plane through $0$.  Therefore
$\det(p_j,p_{j+1})>0$ for every $j$, so $A_s>0$ and
$0\notin\operatorname{line}(p_j,p_{j+1})$.  A segment whose endpoints subtend
an angle less than $\pi$ at $0$ and whose line misses $0$ meets each ray from
$0$ in at most one point, and exactly one for arguments in the closed interval
it subtends.  Concatenating, the cycle is a radial graph over the full circle,
hence simple, and its image is
$\{\rho(\theta)e^{i\theta}\}$ with $\rho\leq r_C$ and
$\rho(\theta_j)=r_C(\theta_j)$.

Suppose $\rho(\theta)<r_C(\theta)$ for some
$\theta\in(\theta_j,\theta_{j+1})$, and put
$y=r_C(\theta)e^{i\theta}\in\partial C$.  Approximating $y$ from outside $C$
and taking nearest points produces a unit vector $n$ with
$\inner{z-y}{n}\leq0$ for all $z\in C$.  Thus
$h\defeq\max_{z\in C}\inner{z}{n}=\inner{y}{n}>0$, and
$C\cap\{\inner{\cdot}{n}=h\}=\conv E_n$ with
$E_n=\{p\in E:\inner{p}{n}=h\}$, since a convex combination of points of $E$
attains the maximum only if supported on $E_n$.  If $E_n$ is a single point
then $y\in E$ and $\theta$ is one of the $\theta_k$, a contradiction.
Otherwise $\conv E_n$ is a segment on a line missing $0$, subtending an
angular interval that contains $\theta$ and hence, since no $\theta_k$ lies in
$(\theta_j,\theta_{j+1})$, contains
$[\theta_j,\theta_{j+1}]$.  Were $\theta_j$ interior to that interval, the ray
at $\theta_j$ would meet $\conv E_n\subseteq\partial C$ at a point of modulus
$r_C(\theta_j)=\lvert p_j\rvert$, namely $p_j$, exhibiting $p_j$ in the
relative interior of a segment of $C$.  Likewise $\theta_{j+1}$ cannot be
interior to that angular interval, by the same argument at $p_{j+1}$.  Hence
$\conv E_n=[p_j,p_{j+1}]$ and $y$ lies on the cycle, contradicting
$\rho(\theta)<r_C(\theta)$.  So $\rho=r_C$ and the image is $\partial C$; the
same computation exhibits each $[p_j,p_{j+1}]$ as a face.  Finally, $C$ is a
positively oriented simple polygonal region, so the local-left orientation
fact shows that $N(p_{j+1}-p_j)$ points out of $C$.
\end{proof}

\begin{lemma}[Crosscuts of a convex polygon]
\label{lem:crosscut}
Let $C$ be as in \Cref{lem:hull-polygon}, with $\partial C$ carrying its
counterclockwise cycle $P$.  Call a polygonal arc $A_0\subseteq C$ with
distinct endpoints $a_0,c_0\in\partial C$ and
$A_0\setminus\{a_0,c_0\}\subseteq\intr C$ a \emph{crosscut}.  Write
$\sigma_R$ for the open arc of $\partial C$ from $a_0$ counterclockwise to
$c_0$ and $\sigma_L$ for the other one, and put
$J_L=A_0\cup\operatorname{cl}\sigma_L$ and
$J_R=A_0\cup\operatorname{cl}\sigma_R$.  Then:
\begin{enumerate}[label=\textup{(\roman*)}]
  \item $J_L,J_R$ are simple closed polygonal curves with
        $\intr J_L,\intr J_R\subseteq\intr C$, and
        $\sigma_R\subseteq\extr J_L$, $\sigma_L\subseteq\extr J_R$;
  \item $\intr C\setminus A_0=\intr J_L\sqcup\intr J_R$.
\end{enumerate}
\end{lemma}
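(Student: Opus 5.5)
The plan is to use the radial (star-shaped) description of $\partial C$ from \Cref{lem:hull-polygon} together with the polygonal Jordan facts already recorded, and to argue via winding numbers. Since $a_0\ne c_0$ lie on the simple cycle $P$, the two arcs $\sigma_R,\sigma_L$ are disjoint open polygonal arcs with $\partial C=\sigma_R\sqcup\sigma_L\sqcup\{a_0,c_0\}$. The crosscut $A_0$ meets $\partial C$ only at $a_0,c_0$, so $A_0\cap\operatorname{cl}\sigma_L=\{a_0,c_0\}$. Hence $J_L$ is a union of two polygonal arcs with common endpoints that are otherwise disjoint, and is therefore a simple closed polygonal curve; the same holds for $J_R$.

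\emph{Step 1: interiors lie in $\intr C$.} The set $\R^2\setminus C$ is path-connected and unbounded, and it misses $J_L\subseteq C$. It therefore lies in a single component of $\R^2\setminus J_L$, namely the unbounded one, so $\extr J_L\supseteq\R^2\setminus C$ and $\intr J_L\subseteq C\setminus J_L$. Also $\intr J_L$ is open and contained in $C$, so it lies in $\intr C$. The same argument applies to $J_R$.

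\emph{Step 2: the arcs $\sigma_R,\sigma_L$ lie in the exteriors.} Take $y\in\sigma_R$. A small disc about $y$ meets $\R^2\setminus C\subseteq\extr J_L$ and is disjoint from $J_L$, because $y\notin J_L$ and $J_L$ is closed. The disc is connected, so $y\in\extr J_L$. This gives (i).

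\emph{Step 3: the decomposition (ii).} I would use winding numbers. Orient $P$ counterclockwise and $A_0$ from $c_0$ to $a_0$. Then $J_R$ is $\operatorname{cl}\sigma_R$ (from $a_0$ to $c_0$) followed by $A_0$, and $J_L$ is $A_0$ reversed followed by $\operatorname{cl}\sigma_L$ (from $c_0$ to $a_0$). Additivity of the index over directed edges gives
\[
\Ind_{J_R}+\Ind_{J_L}=\Ind_P=1 \quad\text{on } \R^2\setminus(P\cup A_0).
\]
Each simple curve has index $0$ on its exterior and a fixed value $\pm1$ on its interior. For $z\in\intr C\setminus A_0$ we have $\Ind_P(z)=1$, so at least one of $\Ind_{J_R}(z),\Ind_{J_L}(z)$ is nonzero; thus $z$ lies in $\intr J_R\cup\intr J_L$.

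\emph{Step 4: disjointness, the main obstacle.} It remains to rule out $z$ lying in both interiors. I expect this to be the main obstacle, because it requires fixing the signs. Pick a point $w$ in the relative interior of an edge of $\sigma_R$. Near $w$, by Step 2, the side outside $C$ lies in $\extr J_L$. The inner side lies in $\intr C$ and is not on $J_L$, so it lies in one component of $\R^2\setminus J_L$, and it is not in $\intr J_R$ only if… This is more direct: the local half-disc fact for $J_R$ at $w$ puts one side in $\intr J_R$, and that side must be the inner one, since the outer side is outside $C$. That inner side is connected to $w\in\extr J_L$ within $\R^2\setminus J_L$, so it lies in $\extr J_L$. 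There $\Ind_P=1$, $\Ind_{J_L}=0$, hence $\Ind_{J_R}=1$, which fixes the interior value of $\Ind_{J_R}$ at $+1$. Symmetrically $\Ind_{J_L}=+1$ on $\intr J_L$. A point in both interiors would then have $\Ind_P=2$, which is impossible. So the union is disjoint, giving (ii). Combined with Step 1, which puts both interiors inside $\intr C\setminus A_0$ since $A_0\subseteq J_L\cap J_R$, this completes the proof.
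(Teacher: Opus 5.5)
Your proof is correct. For (i) and for the covering half of (ii) it is the paper's argument: the path-connected unbounded set $\R^2\setminus C$ lies in both exteriors, and the oppositely traversed copies of $A_0$ cancel, so $\Ind_{J_L}+\Ind_{J_R}=\Ind_P$.

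You diverge only at disjointness. The paper closes by connectedness. $\intr J_R$ is connected and misses $J_L$, and every point of $\sigma_R\subseteq\extr J_L$ is a limit of points of $\intr J_R$. Hence $\intr J_R\subseteq\extr J_L$ at once.

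You instead apply the local half-disc fact at a non-vertex point $w\in\sigma_R$ to produce points of $\intr J_R\cap\extr J_L$. From the index identity you then read off that $\Ind_{J_R}=+1$ on $\intr J_R$, and symmetrically $\Ind_{J_L}=+1$ on $\intr J_L$. Overlap is excluded because it would force $\Ind_P=2$.

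The paper's route is shorter and needs no sign bookkeeping. Indeed, once your inner half-disc is known to lie in $\intr J_R\cap\extr J_L$, the connectedness argument finishes immediately. Your route proves more, however: with the orientations induced from $P$, both $J_L$ and $J_R$ are positively oriented. That is exactly the sign the paper re-derives by hand in Step~1 of \Cref{lem:hull-order}, where it must exclude $(x,y)=(0,-1)$. With your version that case is ruled out directly, since $\Ind_{J_L}$ then takes only the values $0$ and $1$.

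Two small repairs are needed:
\begin{enumerate}
  \item In Step~3, the identity $\Ind_{J_R}+\Ind_{J_L}=\Ind_P$ holds on $\R^2\setminus(P\cup A_0)$, but $\Ind_P=1$ only on $\intr C$; it is $0$ outside $C$. Restrict the displayed ``$=1$'' to $\intr C$, which is how you actually use it.
  \item Delete the dangling fragment ``it is not in $\intr J_R$ only if\ldots'' in Step~4.
\end{enumerate}
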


\begin{proof}
(i) The intersection
$A_0\cap\operatorname{cl}\sigma_L=\{a_0,c_0\}$ because
$A_0\setminus\{a_0,c_0\}\subseteq\intr C$; two arcs with common endpoints and
otherwise disjoint form a simple closed curve.  After inserting their
endpoints as vertices and consolidating codirected pieces, it has a finite
simple polygonal-cycle presentation.  The path-connected set
$\R^2\setminus C$ is unbounded and misses
$J_L$, so it lies in $\extr J_L$ and $\intr J_L\subseteq\intr C$.  Then
$\sigma_R$, being connected, disjoint from $J_L$ and contained in
$\partial C$, lies in $\extr J_L$.  The assertions with $L$ and $R$
interchanged follow identically.

(ii) Orient $J_L$ as $A_0$ from $a_0$ to $c_0$ followed by $\sigma_L$, and
$J_R$ as $\sigma_R$ followed by $A_0$ reversed.  The two copies of $A_0$
contribute opposite signed angles, so
$\Ind_{J_L}+\Ind_{J_R}=\Ind_P$ off $A_0\cup\partial C$; and
$\Ind_P=1$ on $\intr C$.  Hence no point of
$\intr C\setminus A_0$ has both indices zero, and a nonzero index places it
inside one of the two curves.  Conversely, (i) puts both interiors in
$\intr C\setminus A_0$.  Finally, $\intr J_R$ is connected and misses $J_L$,
and every point of $\sigma_R$ is a limit of points of $\intr J_R$ by the
common-boundary fact.  Since $\sigma_R\subseteq\extr J_L$, it follows that
$\intr J_R\subseteq\extr J_L$, proving disjointness.
\end{proof}

\begin{lemma}[Hull order]
\label{lem:hull-order}
Let $F$ be a positively oriented simple polygonal region and $C_F=\conv F$.
Let $p_0,\dots,p_{r-1}$ be the extreme points of $C_F$ in the counterclockwise
order of $\partial C_F$ \textup{(\Cref{lem:hull-polygon}; $r\geq3$)}, and
write $E=\{p_0,\dots,p_{r-1}\}$.  Then
each $p_j$ is a vertex of $F$, and the counterclockwise traversal of
$\partial F$ meets $p_0,\dots,p_{r-1}$ in that cyclic order.  Consequently
$\partial F$ splits into chains $\gamma_j$ from $p_j$ to $p_{j+1}$ with
$\area{F}=\sum_j I(\gamma_j)$ and
$\area{C_F}=\sum_j I([p_j,p_{j+1}])$.
\end{lemma}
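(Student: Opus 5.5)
I will prove the three assertions in turn: each $p_j$ is a vertex of $F$, the traversal meets them in hull order, and the resulting area identities hold.

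\emph{Vertices.} Since $\partial F$ is a simple polygonal cycle, $F$ is the convex hull of its listed vertices $S$. By \Cref{lem:hull-polygon} applied to $S$ (the hull has nonempty interior because $F$ has positive area), the extreme points satisfy $E\subseteq S$. So each $p_j$ is a vertex of $F$. Moreover $E\subseteq\partial F$, since an extreme point of $\conv F$ lying in $F$ cannot be interior to $F$.

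\emph{Cyclic order.} This is the main step, and I would handle it with the crosscut lemma. Suppose the counterclockwise traversal of $\partial F$ visits $p_0,\dots,p_{r-1}$ in some cyclic order other than the hull order. Then there are four hull vertices $p_a,p_b,p_c,p_d$ in hull cyclic order that the traversal visits in the interleaved order $p_a,p_c,p_b,p_d$. (With three points either orientation is possible, so I will also need the orientation argument below.)

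Take the subarc $\beta$ of $\partial F$ from $p_a$ to $p_c$ that avoids $p_b$ and $p_d$. I would like to use $\beta$ as a crosscut, but it may touch $\partial C_F$ at intermediate points. I would therefore shorten it: take the last point $a_0$ of $\beta$ on the closed hull arc through $p_b$'s side, then the next point $c_0$ at which $\beta$ returns to $\partial C_F$, so that the piece between them is a genuine crosscut separating $p_b$ from $p_d$. This choice of subarc is the delicate part, and I expect it to be the main obstacle.

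Given the crosscut, \Cref{lem:crosscut}(ii) splits $\intr C_F$ into $\intr J_L$ and $\intr J_R$, with $\sigma_R\subseteq\extr J_L$ and $\sigma_L\subseteq\extr J_R$, placing $p_b$ and $p_d$ on opposite sides. The complementary arc of $\partial F$, running from $p_c$ through $p_b$ and $p_d$ back to $p_a$, is disjoint from the crosscut by simplicity of $\partial F$. It must still connect the two sides. It cannot leave $C_F$, and it meets $\partial C_F$ only in a controlled way. So it must cross the crosscut, which is a contradiction.

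\emph{Orientation.} Interleaving is now excluded, so the traversal visits the $p_j$ either in hull order or in reversed hull order. To exclude the reversed order, I would use the area decomposition below: reversed order would make $\sum_j I([p_j,p_{j+1}])$ computed with $\partial F$'s orientation equal to $-\area{C_F}$. I would try to contradict this using the endpoint-preserving reversal \Cref{lem:reversal}, or more simply by noting that near an extreme point $F$ lies locally to the left of its boundary, and the hull's outward normal from \Cref{lem:hull-polygon} forces the incident edges of $F$ to turn the same way as $\partial C_F$.

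\emph{Area identities.} Cut $\partial F$ at the $p_j$ to obtain chains $\gamma_j$ from $p_j$ to $p_{j+1}$. Then $\area{F}=A_s(\partial F)=\sum_j I(\gamma_j)$ by additivity of \eqref{eq:chain-area} over concatenation. The identity $\area{C_F}=\sum_j I([p_j,p_{j+1}])$ is $A_s>0$ of the hull cycle from \Cref{lem:hull-polygon}, together with the fact that for a simple positively oriented polygon $A_s$ equals the area.
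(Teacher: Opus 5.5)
\textbf{Gap: the step that excludes the reversed order is not carried out.}

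Your four-point argument against interleaving is sound in outline. Choose the crosscut as the gap of $\beta$ that follows its last contact with the component of $\partial C_F\setminus\{p_b,p_d\}$ containing $p_a$. The next contact then lies in the other component, so $a_0,p_b,c_0,p_d$ alternate on $\partial C_F$. \Cref{lem:crosscut} then makes $\intr J_L\cup\sigma_L$ and $\intr J_R\cup\sigma_R$ disjoint, relatively open pieces of $C_F\setminus A_0$.

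The problem is that this argument never uses the orientation of $\partial F$. It applies verbatim to a clockwise polygon, so it only shows that the $p_j$ are met in hull order or in reversed hull order. All the orientation content of the lemma therefore sits in your last step, and neither of your suggestions settles it.
\begin{itemize}
  \item The area route gives no contradiction. In the reversed case $\area{F}$ is still the sum of $I$ over chains from $p_{j+1}$ to $p_j$. Block reversal by \Cref{lem:reversal} produces a walk of signed area $-2\area{C_F}-\area{F}$, for which \Cref{lem:sort-max} is vacuous.
  \item The local route shows only that $\partial F$ turns left at each $p_j$ inside the tangent cone of $C_F$. That is a statement about the two edges at $p_j$. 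It says nothing about which extreme point is reached next: nothing local at $p_j$ prevents the outgoing chain from bending back through $\intr C_F$ to $p_{j-1}$. Ruling this out is a global separation statement.
\end{itemize}

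\textbf{What is missing:} a bridge from \emph{locally left} to a definite global side of a crosscut. The paper's Step~1 provides it. For a positively traversed crosscut $A_0\subseteq\partial F$ from $a_0$ to $c_0$, the left half-disc at a generic point of $A_0$ lies in $\intr J_L$. This follows from the identity $\Ind_{J_L}+\Ind_{J_R}=1$ off $A_0$, the opposite unit jumps across $A_0$, and the fact that each simple index takes only $0$ and one fixed sign.

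Your local picture can also deliver this, but only if you apply it at the crosscut endpoint $a_0$ rather than at the extreme points. Near $a_0$, the left side of $A_0$ is the sector between $A_0$ and the direction in which $\partial C_F$ leaves $a_0$ clockwise; both bounding rays lie in $J_L$. The complementary sector meets $\R^2\setminus C_F$, which lies in $\extr J_L$. Two-sidedness of $J_L$ at points of the first edge of $A_0$ then puts the left side in $\intr J_L$.

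\textbf{How this closes the lemma.} Connectedness of $\intr F$ gives $F\subseteq\intr J_L\cup J_L$, hence $F\cap\sigma_R=\emptyset$. So no extreme point lies on the counterclockwise arc from $a_0$ to $c_0$. Contact segments are traversed counterclockwise, so every excursion re-lands further counterclockwise without passing an extreme point. Hence the first extreme point met after $p_j$ is $p_{j+1}$. This gives the cyclic order and the orientation at once, and makes your interleaving argument unnecessary; the paper packages it as the arclength lift of Step~3. Your vertex and area-identity parts are fine.
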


\begin{proof}
$C_F$ is the hull of the vertex list of $F$, so by
\Cref{lem:hull-polygon} its extreme points are among those vertices; in
particular $E\subseteq F$.

\emph{Step 1 (oriented crosscut).}  Let
$A_0\subseteq\partial F$ be a crosscut of $C_F$ traversed in the positive
direction from $a_0$ to $c_0$.  Its relative interior contains a point $p$
interior to an edge of $\partial F$ and not a vertex of $A_0$, because it is a
nonempty polygonal open arc with only finitely many vertices.  Choose $\eps>0$
smaller than the local two-sidedness and unit-jump radii for $\partial F$,
$J_L$, and $J_R$, and so small that
$D(p,\eps)\cap\partial F=D(p,\eps)\cap\operatorname{line}(e)$,
$D(p,\eps)\subseteq\intr C_F$, and
$D(p,\eps)\cap\partial F\subseteq A_0$.  The disc then meets $J_L$ and $J_R$
in that same diameter.  Write $D_L,D_R$ for the half-discs left and right of
the traversal.  By the local-left orientation fact
$D_L\subseteq\intr F$.  Let $x,y$ be the values of $\Ind_{J_L}$ on
$D_L,D_R$, and $s,t$ those of $\Ind_{J_R}$.  Crossing an edge from right to
left raises the index by one, so $x-y=1$ and $t-s=1$; the latter uses that
$A_0$ is traversed backwards in $J_R$.  As in the proof of
\Cref{lem:crosscut}(ii), the oppositely traversed copies of $A_0$ cancel, so
$\Ind_{J_L}+\Ind_{J_R}=\Ind_P=1$ on $\intr C_F\setminus A_0$ and
$x+s=y+t=1$.  Each simple curve's index takes
only zero and one fixed sign in $\{\pm1\}$, so $x-y=1$ leaves
$(x,y)\in\{(1,0),(0,-1)\}$.  The second case forces $s=1$ and then $t=2$,
which is impossible.  Hence $\Ind_{J_L}=1$ on $D_L$, and
$D_L\subseteq\intr J_L$.

\emph{Step 2 (no part of $F$ on the right).}  The connected set $\intr F$
misses $J_L$ and meets $\intr J_L$ by Step~1, so
$\intr F\subseteq\intr J_L$.  Regular closedness gives
$F=\operatorname{cl}(\intr F)\subseteq\intr J_L\cup J_L$, which misses
$\sigma_R$ by \Cref{lem:crosscut}(i).  Thus $F\cap\sigma_R=\emptyset$.
Since $E\subseteq F$, the connected arc $\sigma_R$ lies in
$\partial C_F\setminus E$, whose connected components are the relative
interiors of the hull edges by \Cref{lem:hull-polygon}.  It therefore lies in
the relative interior of one hull edge.  Thus a positively traversed excursion
of $\partial F$ into $\intr C_F$ re-lands further counterclockwise and without
passing an extreme point of $C_F$.

\emph{Step 3 (assembly).}  If $\partial F\subseteq\partial C_F$, then
$\partial F=\partial C_F$: otherwise removing a point of
$\partial C_F\setminus\partial F$ would embed the simple closed curve
$\partial F$ in an open arc.  Since both curves are positively oriented, the
claim is then immediate.  Otherwise parametrize $\partial F$ positively by
$g:[0,\Lambda]\to\partial F$, injective on $[0,\Lambda)$, with
$g(0)=g(\Lambda)=p_0$.  Put $T=g^{-1}(\partial C_F)$.  This is a finite union
of closed intervals and points, because
$\partial F\cap\partial C_F$ is a finite union of points and segments.

Each gap $(t_i,t_i')$ of $T$ is a genuine positively traversed crosscut.  Its
endpoints are distinct: equality could occur, by injectivity, only if the gap
were all of $(0,\Lambda)$, whereas the other $r-1\geq2$ extreme points give
contacts in $T\cap(0,\Lambda)$.  Step~2 therefore supplies a counterclockwise
hull arc from $g(t_i)$ to $g(t_i')$ of length
$0<\delta_i<\ell$, contained in one hull edge and with no extreme point in
its relative interior, where $\ell$ is the perimeter of $C_F$.  On every
nondegenerate component of $T$, decompose each hull-contact segment into
nondegenerate subsegments of individual edges of $F$.  The local-left orientation fact
forces each such subsegment to be codirected with the counterclockwise
direction of its containing hull edge.  Thus the counterclockwise arclength
modulo $\ell$, based at $p_0$, has a strictly increasing lift on every such
component; isolated contacts contribute nothing.  Concatenating these lifts
and increasing linearly by $\delta_i$ across each gap gives a continuous
nondecreasing lift $\Phi:[0,\Lambda]\to\R$ with $\Phi(0)=0$, congruent modulo
$\ell$ to the counterclockwise arclength of $g(t)$ for every $t\in T$.

Because $g(\Lambda)=g(0)$, $\Phi(\Lambda)$ is a nonnegative integral multiple
of $\ell$.  It is nonzero: otherwise monotonicity would make $\Phi$ identically
zero, while $g(T)$ contains all $r\geq3$ extreme points.  It cannot be at least
$2\ell$.  If it were, continuity would give $t_*\in(0,\Lambda)$ with
$\Phi(t_*)=\ell$.  If $t_*\in T$, congruence gives $g(t_*)=p_0$, contradicting
injectivity.  Otherwise $t_*\in(t_i,t_i')$ for a gap.  Neither endpoint can
have value $\ell$ by the just-handled $T$ case, so monotonicity and the affine
definition on the gap give strictly
$\Phi(t_i)<\ell<\Phi(t_i')$.  Since $\delta_i<\ell$, this places $p_0$
strictly inside the counterclockwise hull arc from $g(t_i)$ to $g(t_i')$,
contradicting Step~2.  Therefore $\Phi(\Lambda)=\ell$.  The extreme points,
whose lift values are their counterclockwise distances from $p_0$, are met in
counterclockwise order.

The two asserted area identities follow by cutting the directed edge list of
$\partial F$ at the $p_j$ and applying \Cref{lem:hull-polygon}.
\end{proof}

The next result converts the rearrangement lemma into the precise hull-area
estimate used later.  Its weak form $\area{\conv F}\leq\area{\Sor(F)}$,
valid for arbitrary closed polygons in the plane, is due to Pach and is
recorded as Proposition~1 of B\"or\"oczky, B\'ar\'any, Makai~Jr.
and Pach~\cite{BBMP}, where its cases of equality are also discussed.  The
lemma below strengthens it, on the smaller domain of positively oriented
simple regions, by the hull-deficit term $\area{\conv F}-\area{F}$; that
term is the source of the factor two in \Cref{thm:C1-intro}.  It is also
exactly the singleton-cycle specialization of Theorem~2 in Siegel's
author-hosted manuscript~\cite[Theorem~2]{Siegel}.  We retain the independent
proof because that manuscript is unpublished and because the hull-block
construction is used later.  Thus no novelty is claimed for \eqref{eq:CVX};
the contribution of C1 is the visibility-kernel and mixed-area synthesis
built around this exact prior ingredient.

\begin{lemma}[Sorted-companion inequality]
\label{lem:companion}
Let $F$ be a positively oriented simple polygonal region, let
$C_F=\conv F$, and let $H=\Sor(F)$.  Then
\begin{equation}
  \area{H}+\area{F}\geq2\area{C_F}.
  \tag{CVX}
  \label{eq:CVX}
\end{equation}
\addtocounter{equation}{1}
\end{lemma}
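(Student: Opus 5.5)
By \Cref{lem:hull-order}, $\partial F$ splits into chains $\gamma_j$ running from $p_j$ to $p_{j+1}$, where $p_0,\dots,p_{r-1}$ are the hull vertices in counterclockwise order. The same lemma gives
\[
\area{F}=\sum_j I(\gamma_j),\qquad \area{C_F}=\sum_j I([p_j,p_{j+1}])=\tfrac12\sum_j\det(p_j,p_{j+1}).
\]
We replace each chain $\gamma_j$ by its endpoint-preserving reversal $\gamma_j^\dagger$ from \Cref{lem:reversal}. This reversal still runs from $p_j$ to $p_{j+1}$, so the concatenation $Q=\gamma_0^\dagger\gamma_1^\dagger\cdots\gamma_{r-1}^\dagger$ is a closed polygonal walk. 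It may self-intersect, and that is harmless.

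By \eqref{eq:reversal},
\[
A_s(Q)=\sum_j\bigl(\det(p_j,p_{j+1})-I(\gamma_j)\bigr)=2\area{C_F}-\area{F}.
\]
The directed edges of $Q$ are exactly those of $\partial F$, only reordered within each block. The sorted companion depends only on the directed edge multiset, so $\Sor(Q)=\Sor(F)=H$ up to translation.

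\Cref{lem:sort-max} applies to arbitrary closed walks and gives $A_s(Q)\le\area{H}$. Combining this with the identity above yields $2\area{C_F}-\area{F}\le\area{H}$, which is \eqref{eq:CVX}.

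Most of the delicate work is already carried by \Cref{lem:hull-order}, which supplies the decomposition into hull blocks in the correct cyclic order, and by \Cref{lem:sort-max}, which covers the non-simple walk. The remaining points need only a little care.

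\begin{itemize}
\item \emph{Nonempty hull interior.} Because $F$ is a Jordan region, $C_F$ has nonempty interior, so \Cref{lem:hull-polygon} and \Cref{lem:hull-order} apply.
\item \emph{Chains lying on the hull boundary.} If $\gamma_j$ runs along the hull edge itself, then $I(\gamma_j)=\det(p_j,p_{j+1})/2$. Its reversal is still harmless, because the identity is purely algebraic.
\item \emph{Translation.} Translating the sorted companion does not change its area, so $\area{\Sor(Q)}=\area{H}$.
\end{itemize}

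The one step needing explicit checking is the cancellation in $A_s(Q)$. Each block contributes $\det(p_j,p_{j+1})-I(\gamma_j)$, with the global base point fixed at the origin. The shoelace formula sums over consecutive vertices, and the blocks share their endpoints $p_j$. The sum therefore splits additively over blocks without any correction terms.
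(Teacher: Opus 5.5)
Your proposal is correct and matches the paper's proof essentially step for step. You take the hull blocks from \Cref{lem:hull-order}, reverse each block virtually by \Cref{lem:reversal} to get a closed walk $Q$ with the same directed edge multiset and $A_s(Q)=2\area{C_F}-\area{F}$, and then apply \Cref{lem:sort-max} to the possibly non-simple $Q$, whose sorted companion is $H$.
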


\begin{proof}
List the extreme points $p_0,\ldots,p_{r-1}$ of $C_F$ in counterclockwise
order; by \Cref{lem:hull-polygon} they are vertices of $F$ and $r\geq3$.  By
\Cref{lem:hull-order} the counterclockwise traversal of $\partial F$ meets them
in that cyclic order.

Let $\gamma_j$ be the full boundary chain of $F$ from $p_j$ to $p_{j+1}$,
with indices read cyclically, and let $L_j=[p_j,p_{j+1}]$.  Shoelace gives
\begin{equation}
  \area{F}=\sum_j I(\gamma_j),
  \qquad
  \area{C_F}=\sum_j I(L_j).
  \label{eq:block-shoelace}
\end{equation}
Replace every $\gamma_j$ virtually by its endpoint-preserving reversal and
concatenate the reversed blocks.  The resulting closed walk $Q$ need not be
simple, but it has exactly the directed edge multiset of $\partial F$.
By \Cref{lem:reversal} and \eqref{eq:block-shoelace},
\[
  A_s(Q)
  =\sum_j\bigl(2I(L_j)-I(\gamma_j)\bigr)
  =2\area{C_F}-\area{F}.
\]
Its sorted companion is $H$.  Applying \Cref{lem:sort-max} to $Q$ yields
$\area{H}\geq2\area{C_F}-\area{F}$, which is \eqref{eq:CVX}.
The signed calculation includes collinear hull contacts and zero-area
pockets; no perturbation of the hull blocks is needed.
\end{proof}

\section{Polygonal cap unions}
\label{sec:polygonal-caps}

We now prove the cap-union inequality when $K$ is a positive-area convex
polygon.

\begin{lemma}[Polygonal cap-union interface]
\label{lem:cap-interface}
Suppose that $K\subset\R^2$ is a positive-area compact convex polygon,
$X\subset\R^2$ is finite and nonempty, and
$K\subseteq C=\conv X$.  Let $U$ be defined by \eqref{eq:cap-union}.  Then:
\begin{enumerate}[label=\textup{(\roman*)}]
  \item $U$ is a positive-area compact simple polygonal region;
  \item $K\subseteq K(U)$, so every point of $K$ sees all of $U$;
  \item with positive boundary orientation, $K$ lies in the closed inward
        half-plane of every boundary edge of $U$; and
  \item $\conv U=C$.
\end{enumerate}
Interior and redundant generators, coincident cap edges, and support-face
contacts are allowed.
\end{lemma}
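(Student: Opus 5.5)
I will prove (ii) and (iv) first, because they are pure convexity statements, and leave the topological claims (i) and (iii) for last.

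For (iv), $X\subseteq U$ gives $C=\conv X\subseteq\conv U$. Conversely each cap $\conv(K\cup\{x\})$ lies in $C$, since $K\subseteq C$ and $x\in C$. Hence $U\subseteq C$ and $\conv U=C$.

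For (ii), take $z\in K$ and $y\in U$, say $y\in\conv(K\cup\{x\})$. The cap is convex and contains $z$, so $[z,y]$ lies in that cap and hence in $U$. Thus $z\in K(U)$. The same argument shows $U$ is star-shaped with respect to every point of $K$. Compactness is clear, as $U$ is a finite union of compact convex polygons, and positive area follows from $K\subseteq U$.

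For (i) and (iii) I will use a radial description. Fix $o\in\intr K$ and translate so that $o=0$. Because $U$ is star-shaped about every point of the open set $\intr K$, each ray from $0$ meets $U$ in a segment $[0,\rho(\theta)]$, with $\rho=\max_x\rho_x$. Here $\rho_x$ is the radial function of the cap from $x$, and each $\rho_x\ge r_K>0$. Every $\rho_x$ is continuous, and piecewise of the form $c/\langle u,e^{i\theta}\rangle$ on finitely many angular intervals, because each cap is a convex polygon containing $0$ in its interior. Therefore $\rho$ is continuous, positive, and piecewise "linear-radial". Its graph $\theta\mapsto\rho(\theta)e^{i\theta}$ is a simple closed polygonal curve, being radial over the full circle exactly as in the proof of \Cref{lem:hull-polygon}. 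The bounded region it encloses is $\{te^{i\theta}:0\le t\le\rho(\theta)\}=U$, which gives (i).

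For (iii), every boundary edge lies on an edge of some cap $\conv(K\cup\{x\})$, since on each angular piece $\rho$ coincides with one $\rho_x$. The edges of such a cap are of two kinds: edges of $K$, or the two tangent segments from $x$ to support points of $K$. In both cases the line of the edge supports that cap, and so it supports $K$ as well. Under the positive (radial, counterclockwise) orientation the cap, and hence $K$, lies on the left, i.e.\ in the inward closed half-plane.

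There are degenerate generators to handle. If $x\in K$, its cap is just $K$. Coincident or collinear edges are harmless because we only need a supporting line of the cap containing the edge.

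The main obstacle I expect is the rigorous polygonality and simplicity of the radial graph of $\rho=\max_x\rho_x$. One must check that the maximum of finitely many such functions switches pieces only finitely often, so that two pieces $c/\langle u,\cdot\rangle$ agree on an interval or at isolated angles. One must also check that the orientation induced by increasing $\theta$ is the positive one, with $A_s>0$ because $\det(q_i,q_{i+1})>0$ along a radial graph. I would handle this by refining to a common angular partition in which every $\rho_x$ is a single piece, and then comparing these finitely many rational expressions pairwise.
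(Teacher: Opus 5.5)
Your proposal is correct and follows essentially the paper's proof. It uses the same radial function $\rho=\max_x\rho_x$ about an interior point of $K$, the same observation that each boundary piece of $U$ lies on an edge of some cap whose occupied (left) side contains $K$, and identical convexity arguments for (ii) and (iv). The one difference is minor: you obtain polygonality from the piecewise form $c/\inner{u}{e^{i\theta}}$ of each $\rho_x$ on a common angular refinement, whereas the paper uses $\partial U\subseteq\bigcup_x\partial P_x$ and subdivides that finite segment arrangement.
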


\begin{proof}
Choose $o\in\operatorname{int}K$.  Every cap
$P_x=\conv(K\cup\{x\})$ contains a fixed closed ball about $o$.  Its
intersection with the ray $o+\R_{\geq0}\theta$ is an interval
$[o,o+\rho_x(\theta)\theta]$, where the positive radial function $\rho_x$ is
continuous.  Since $X$ is finite,
\[
  \rho(\theta)=\max_{x\in X}\rho_x(\theta)
\]
is positive and continuous, and the ray section of $U$ is the interval with
endpoint $o+\rho(\theta)\theta$.  Thus this radial graph parametrizes
$\partial U$ as a Jordan curve.

Each $P_x$ is polygonal, and a point of $\partial U$ cannot lie in the
interior of any $P_x$.  Hence
$\partial U\subseteq\bigcup_x\partial P_x$.  Subdivide this finite segment
arrangement at all endpoints, crossings, and endpoints of collinear overlaps.
The Jordan boundary is a finite union of the resulting elementary segments,
and therefore is polygonal.  Since $K\subseteq U$, it encloses positive area.

If $z\in K$ and $y\in U$, choose $x$ with $y\in P_x$.  Convexity gives
$[z,y]\subseteq P_x\subseteq U$, proving (ii).  A boundary segment of $U$
lies on the boundary of at least one cap; that cap, and hence $K$, lies on its
occupied side, which is the inward side in the positive traversal.  This
proves (iii), including coincident cap edges because all caps share the same
positive-area $K$.

Finally, $K\cup X\subseteq C$ gives $U\subseteq C$, while
$x\in P_x\subseteq U$ for every $x\in X$.  Thus
$C=\conv X\subseteq\conv U\subseteq C$.
\end{proof}

For a positively oriented polygonal region $F$ with directed boundary edges
$e$, define the anisotropic support sum
\begin{equation}
  \Ssup(F)=\sum_e h_K\bigl(N(e)\bigr).
  \label{eq:anisotropic-support}
\end{equation}

This is an \emph{absolute} quantity, homogeneous of degree two in the data,
and it coincides with the anisotropic perimeter $\Per_K(F)$ of
Nakano~\cite{Nakano}.  We warn the reader that he reserves the symbol $P_K$
for the normalized ratio $P_K(F)=\Per_K(K)/\Per_K(F)$, which he defines only
when $K(F)$ has positive area; the two should not be conflated.

\begin{lemma}[Support and mixed-area bridge]
\label{lem:support-bridge}
Let $F$ be a positive-area simple polygonal region, and suppose the nonempty
compact convex set $K$ lies in every closed inward boundary-edge half-plane
of $F$.  If $H=\Sor(F)$, then
\begin{equation}
  \area{F}^{2}\geq\area{K}\area{H}.
  \label{eq:product-bound}
\end{equation}
\end{lemma}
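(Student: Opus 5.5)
The plan is to compare both sides through the anisotropic support sum $\Ssup(F)$ of \eqref{eq:anisotropic-support}. I will show $\Ssup(F)\leq 2\area{F}$ using the half-plane hypothesis, and $\Ssup(F)=2V(H,K)$ using \Cref{lem:support-mixed}. Then \Cref{lem:minkowski} closes the argument.

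\emph{Step 1: upper bound from the half-plane hypothesis.} Orient $\partial F$ positively, with vertices $q_0,\dots,q_{m-1}$ read cyclically and edges $e_i=q_{i+1}-q_i$. Since $F$ lies locally to the left of each directed edge, $N(e_i)$ is an outward normal of that edge. Hence the closed inward half-plane of the edge is $\{z:\inner{z-q_i}{N(e_i)}\leq0\}$. The hypothesis $K\subseteq$ this half-plane gives
\[
  h_K\bigl(N(e_i)\bigr)\leq\inner{q_i}{N(e_i)}=\det(q_i,e_i)=\det(q_i,q_{i+1}),
\]
where the middle equality is a direct check from $N(a,b)=(b,-a)$. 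Summing over $i$ and using the shoelace formula \eqref{eq:chain-area} for the positive boundary of a simple region yields $\Ssup(F)\leq 2\area{F}$.

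\emph{Step 2: identification with a mixed area.} By construction $H=\Sor(F)$ has exactly the directed edge multiset of $\partial F$ after deleting zero edges, placed in counterclockwise angular order; codirected edges may be consolidated. Because $F$ is a simple region of positive area, its edge directions are not all supported on one line, so $H$ is a convex polygon with nonempty interior. Therefore \Cref{lem:support-mixed} applies to $H$ and $K$. By the subdivision insensitivity recorded in \Cref{rem:support-mixed-riders}, the sum $\sum h_K(N(e))$ over the possibly split or consolidated edge list of $H$ equals the same sum over the edges of $F$. This gives $2V(H,K)=\Ssup(F)$, and so $V(H,K)\leq\area{F}$ by Step~1.

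\emph{Step 3: Minkowski.} \Cref{lem:minkowski} gives $V(H,K)^2\geq\area{H}\area{K}$. Its proof in fact shows $V(H,K)\geq\sqrt{\area{H}\area{K}}\geq0$, so both sides of $V(H,K)\leq\area{F}$ are nonnegative and squaring is legitimate:
\[
  \area{F}^2\geq V(H,K)^2\geq\area{K}\area{H}.
\]
This holds even when $K$ is a point or a segment, where the right side is zero.

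The main obstacle is Step~2's bookkeeping rather than any inequality. One must confirm that the edge list of $\Sor(F)$, which is only defined up to translation and consolidation of codirected edges, gives the same support sum as $\partial F$. One must also confirm that $H$ genuinely has nonempty interior, so that \Cref{lem:support-mixed} applies. I would settle the first point via positive homogeneity of $h_K$ together with the translation invariance $\sum_i N(e_i)=0$. For the second, note that a simple polygonal boundary enclosing positive area cannot have all its edges parallel to a single line.
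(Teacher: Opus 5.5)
Your proposal is correct and follows the paper's proof step for step. The half-plane hypothesis and shoelace give $\Ssup(F)\leq2\area{F}$. Invariance of the support sum under sorting, together with \Cref{lem:support-mixed} applied to the nonempty-interior companion $H$, gives $2V(H,K)=\Ssup(F)$. \Cref{lem:minkowski} then finishes the argument, using $0\leq V(H,K)\leq\area{F}$, and you justify the nonnegativity of $V(H,K)$ explicitly where the paper simply states it.
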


\begin{proof}
Let $q_i$ be the boundary vertices, $e_i=q_{i+1}-q_i$, and
$n_i=N(e_i)$.  The half-plane hypothesis says
\[
  \inner{y}{n_i}\leq\inner{q_i}{n_i}
  =\det(q_i,q_{i+1})
  \qquad(y\in K).
\]
Taking the maximum over $K$ and summing gives, by shoelace,
\begin{equation}
  \Ssup(F)\leq2\area{F}.
  \label{eq:support-bound}
\end{equation}
Sorting changes only the order and placement of the directed edge vectors, so
$\Ssup(H)=\Ssup(F)$.  Since $\area{F}>0$, the directed edges of $F$ are not
all supported on a line, so $H=\Sor(F)$ has positive area and hence nonempty
interior; thus \Cref{lem:support-mixed} applies, and
\eqref{eq:support-mixed} gives
\[
  2V(H,K)=\Ssup(H)=\Ssup(F)\leq2\area{F}.
\]
Thus $0\leq V(H,K)\leq\area{F}$.  Combining this with
\Cref{lem:minkowski} yields
\[
  \area{F}^{2}\geq V(H,K)^2\geq\area{H}\area{K}.
\]
No strict support contact and no division are used.
\end{proof}

\begin{proposition}[Polygonal cap-union inequality]
\label{prop:polygonal-S}
Under the hypotheses of \Cref{lem:cap-interface}, the cap union satisfies
\eqref{eq:Szero}.
\end{proposition}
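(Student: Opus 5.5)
We combine the three preceding lemmas applied to $F=U$ and then finish with a purely algebraic step. By \Cref{lem:cap-interface}(i), $U$ is a positive-area compact simple polygonal region, which we orient positively. By part (iv), $\conv U=C$, and $C$ has nonempty interior because it contains the positive-area $K$. Set $H=\Sor(U)$. Then \Cref{lem:companion}, applied with $F=U$, gives
\[
  \area{H}+\area{U}\geq 2\area{C}.
\]
By \Cref{lem:cap-interface}(iii), $K$ lies in every closed inward boundary-edge half-plane of $U$. \Cref{lem:support-bridge}, again with $F=U$, then gives
\[
  \area{U}^2\geq\area{K}\area{H}.
\]

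The combination needs no division. Multiply the companion inequality by $\area{K}\geq0$ to get
\[
  \area{K}\area{H}+\area{K}\area{U}\geq 2\area{K}\area{C}.
\]
Substituting the product bound $\area{U}^2\geq\area{K}\area{H}$ into the first term yields
\[
  \area{U}^2+\area{K}\area{U}\geq 2\area{K}\area{C},
\]
which is \eqref{eq:Szero}.

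The only delicate point is checking that the hypotheses of the earlier lemmas are really met. \Cref{lem:companion} needs a positively oriented simple polygonal region, and its proof relies on \Cref{lem:hull-polygon}, which requires the hull to have nonempty interior. Both are supplied by \Cref{lem:cap-interface}(i),(iv) together with $\area{K}>0$. We must also make sure that the sorted companion used in \Cref{lem:companion} is the same $H$ as in \Cref{lem:support-bridge}. This holds because both are $\Sor$ of the positive boundary edge multiset of $U$, defined up to translation and consolidation of codirected edges, and neither area changes under those operations. I expect no real obstacle here: the substantive work has already been done in \Cref{lem:cap-interface,lem:companion,lem:support-bridge}.
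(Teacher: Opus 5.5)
Your proposal is correct and follows the paper's proof: you apply \Cref{lem:companion} and \Cref{lem:support-bridge} to $F=U$, whose hypotheses hold by \Cref{lem:cap-interface} with $\conv U=C$, and then combine $\area{U}^2\geq\area{K}\area{H}\geq\area{K}\bigl(2\area{C}-\area{U}\bigr)$ without division. Your extra checks, that $C$ has nonempty interior and that both lemmas use the same $H=\Sor(U)$, are sound additions to the same argument.
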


\begin{proof}
Write $k=\area{K}$, $u=\area{U}$, $c=\area{C}$, and
$H=\Sor(U)$.  By \Cref{lem:cap-interface}, $U$ is a legal input to
\Cref{lem:companion,lem:support-bridge}.  Since $\conv U=C$, they give
\[
  \area{H}+u\geq2c,
  \qquad
  u^2\geq k\area{H}.
\]
Therefore
\[
  u^2\geq k\area{H}\geq k(2c-u),
\]
which rearranges to $u^2+ku\geq2kc$.  This is
\eqref{eq:Szero}.
\end{proof}

\section{Compact export and the polygon theorem}
\label{sec:compact-export}

The remaining issue in \Cref{thm:S-intro} is that $K$ need not be polygonal.
We use inner approximants with one fixed inball.  This gives the exact area
control needed for the finite nonconvex union of caps.

\begin{lemma}[Common-inball area estimate]
\label{lem:inball-area}
Let $A_n\subseteq A$ be compact convex sets, and suppose
$o+rB\subseteq A_n$ for a fixed $r>0$, where $B$ is the closed unit disk.  If
$d_H(A_n,A)\leq\delta$, then
\begin{equation}
  \area{A_n}\leq\area{A}
  \leq(1+\delta/r)^2\area{A_n}.
  \label{eq:area-estimate}
\end{equation}
\end{lemma}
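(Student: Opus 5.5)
The left inequality $\area{A_n}\leq\area{A}$ is immediate from $A_n\subseteq A$ and monotonicity of Lebesgue measure. All the work is in the right inequality. For that I would translate so that $o=0$ and show the set inclusion
\[
  A\subseteq(1+\delta/r)A_n .
\]
Area scales by the square of the dilation factor in the plane, so this inclusion gives $\area{A}\leq(1+\delta/r)^2\area{A_n}$ directly.

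To prove the inclusion I would work with support functions; this requires no interior hypothesis on $A$ beyond what $A_n$ supplies. With $o=0$ we have $rB\subseteq A_n$, so $h_{A_n}(u)\geq r$ for every unit vector $u$. The Hausdorff bound $d_H(A_n,A)\leq\delta$ gives $A\subseteq A_n+\delta B$. Hence
\[
  h_A(u)\leq h_{A_n}(u)+\delta\leq h_{A_n}(u)+(\delta/r)h_{A_n}(u)=(1+\delta/r)h_{A_n}(u).
\]
Since $(1+\delta/r)A_n$ is compact and convex, it is the intersection of its supporting half-planes. The support inequality above therefore yields $A\subseteq(1+\delta/r)A_n$.

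If I wanted to avoid the support-function characterisation, I would argue directly instead. Take $a\in A$ and write $a=b+\delta w$ with $b\in A_n$ and $\lvert w\rvert\leq1$. Then
\[
  a=\Bigl(1+\frac{\delta}{r}\Bigr)\Bigl[\frac{1}{1+\delta/r}\,b+\frac{\delta/r}{1+\delta/r}\,(rw)\Bigr].
\]
The bracket is a convex combination of $b\in A_n$ and $rw\in rB\subseteq A_n$, so it lies in $A_n$ by convexity.

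I do not expect a real obstacle here. The only points needing care are the following. First, the translation to $o=0$ must leave areas and Hausdorff distance invariant, which it does. Second, I must use only the one-sided Hausdorff containment $A\subseteq A_n+\delta B$; the other direction is automatic from $A_n\subseteq A$. Third, the dilation has to be taken about the common inball centre, which is exactly why the hypothesis fixes $o$ and $r$ independently of $n$. The direct convex-combination argument is the version I would write, since it is elementary and uses no limiting step.
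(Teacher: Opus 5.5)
Your proof is correct and matches the paper's, which proves the same inclusion $A\subseteq A_n+\delta B\subseteq o+(1+\delta/r)(A_n-o)$ using $\delta B\subseteq(\delta/r)(A_n-o)$ and convexity; that is exactly your convex-combination computation, followed by taking areas. Your support-function version is an equally valid way of obtaining the same inclusion.
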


\begin{proof}
Hausdorff containment and the inball give
\[
  A\subseteq A_n+\delta B
  \subseteq o+(1+\delta/r)(A_n-o).
\]
Indeed, $rB\subseteq A_n-o$, so
$\delta B\subseteq(\delta/r)(A_n-o)$, and convexity together with
$0\in A_n-o$ identifies the resulting Minkowski sum with the displayed
dilate.  Taking areas proves \eqref{eq:area-estimate}.
\end{proof}

\begin{proof}[Proof of \Cref{thm:S-intro}]
Write $k=\area{K}$, $u=\area{U}$, and $c=\area{C}$.  First suppose $k>0$.
Then $K$ has nonempty interior.  Choose a nondegenerate closed triangle
$T\subset\operatorname{int}K$ containing a ball $o+rB$, $r>0$.  For each
$n\geq1$, choose a finite $1/n$-net $D_n\subset K$ and put
\begin{equation}
  K_n=\conv\bigl(T\cup D_1\cup\cdots\cup D_n\bigr).
  \label{eq:Kn}
\end{equation}
Then $K_n$ is a positive-area convex polygon,
\[
  K_n\subseteq K_{n+1}\subseteq K,
  \qquad d_H(K_n,K)\leq1/n,
\]
and every $K_n$ contains $o+rB$.  By \Cref{lem:inball-area},
$k_n=\area{K_n}\to k$.

Keep the generator set $X$ fixed and define
\[
  P_{n,x}=\conv(K_n\cup\{x\}),
  \qquad
  P_x=\conv(K\cup\{x\}),
  \qquad
  U_n=\bigcup_{x\in X}P_{n,x}.
\]
Replacing the $K$-entry in a convex combination by a nearest point of $K_n$
shows
\[
  d_H(P_{n,x},P_x)\leq d_H(K_n,K).
\]
Both bodies contain the fixed inball, so \Cref{lem:inball-area} gives
$\area{P_{n,x}}\to\area{P_x}$ for every $x$.  Moreover
$P_{n,x}\subseteq P_x$, and finiteness of $X$ yields
\begin{equation}
  0\leq\area{U}-\area{U_n}
  \leq\sum_{x\in X}
    \bigl(\area{P_x}-\area{P_{n,x}}\bigr)
  \longrightarrow0.
  \label{eq:union-convergence}
\end{equation}
Thus $u_n=\area{U_n}\to u$.  The generator set has not changed, and
$X\subseteq U_n\subseteq C$, so $\conv U_n=C$ for every $n$.

Apply \Cref{prop:polygonal-S} to $(K_n,X)$:
\[
  u_n^2+k_n u_n\geq 2k_n c.
\]
Letting $n\to\infty$ proves \eqref{eq:Szero} when $k>0$.  If $k=0$, including
the cases where $K$ is a point or segment, \eqref{eq:Szero} is simply
$u^2\geq0$.  These branches exhaust the theorem's domain.
\end{proof}

We now pass from the convex-body theorem to polygons.

\begin{proof}[Proof of \Cref{thm:C1-intro}]
Put
\[
  f=\area{F},\qquad k=\area{K(F)},\qquad c=\area{C(F)}.
\]
If the kernel is empty or $k=0$, \eqref{eq:C1} reduces to $f^2\geq0$.
Assume $k>0$.  The visibility kernel is then a nonempty compact convex set:
it is a closed subset of the compact region $F$, and convexity follows from
the associativity of segments applied to two star centers.  Let $X$ be the
finite set of hull vertices of $F$.  For each $x\in X$ and $z\in K(F)$,
visibility gives $[z,x]\subseteq F$.  Consequently
\[
  \conv(K(F)\cup\{x\})\subseteq F,
  \qquad
  U\subseteq F.
\]
Also $X\subseteq U$, so $\conv U=C(F)$.  With $u=\area{U}$,
\Cref{thm:S-intro} gives $u^2+ku-2kc\geq0$, while $f\geq u$.  The exact
identity
\begin{equation}
  f^2+kf-2kc
  =\bigl(u^2+ku-2kc\bigr)+(f-u)(f+u+k)
  \label{eq:monotone-transfer}
\end{equation}
therefore shows $f^2+kf\geq2kc$.  Rearranging is precisely
\eqref{eq:C1}.  Dividing the subtraction-free form by $fc>0$ gives
$A+AG\geq2G$, hence \eqref{eq:ratio-form}.
\end{proof}

\section{Sharpness and comparison with Nakano}
\label{sec:sharpness}

The cap-union theorem is attained trivially when $K=C$ and $X$ generates
$C$.  More importantly, the coefficient two in the polygon theorem is
attained by nonconvex polygons with positive-area kernels.

\begin{proposition}[A nonconvex equality family]
\label{prop:sibley-family}
For $0<a<1$, let $F_a$ be the simple polygonal region with cyclic vertices
\[
  (-1,0),\quad (0,a),\quad (1,0),\quad (0,1),
\]
listed counterclockwise.
Then
\[
  \area{C(F_a)}=1,
  \qquad
  \area{F_a}=1-a,
  \qquad
  \area{K(F_a)}=\frac{(1-a)^2}{1+a},
\]
and equality holds in \eqref{eq:C1}.
\end{proposition}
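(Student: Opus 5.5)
The plan is to compute the three areas directly and then substitute them into \eqref{eq:C1}. The vertex $(0,a)$ is the only reflex vertex, so $F_a$ is a dart. Its convex hull is the triangle with vertices $(-1,0),(1,0),(0,1)$, which has area $1$. The hull vertices are exactly these three points, and $(0,a)$ lies strictly inside this triangle because $0<a<1$. The region $F_a$ is the hull with the triangle $(-1,0),(0,a),(1,0)$ removed. That triangle has area $a$, so $\area{F_a}=1-a$. One can confirm this with the shoelace formula \eqref{eq:chain-area} on the counterclockwise vertex list.

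For the kernel, I would use the standard fact that the visibility kernel of a simple polygon is the intersection of the closed inward half-planes of its boundary edges. I expect this to be the one step needing care, since the paper has so far proved only that kernels are convex and closed. The inclusion $K(F)\subseteq\bigcap$ half-planes is the easy direction. If $z$ lies strictly on the outer side of an edge's line, then $z$ cannot see points of $F$ just inside the relative interior of that edge, because the segment to such a point would have to cross the edge from outside. For the reverse inclusion, a point $z$ in every inward half-plane satisfies $\det(q_i-z,q_{i+1}-z)\ge0$ for every edge. So the rays from $z$ sweep $\partial F$ monotonically in angle, and $F$ is star-shaped from $z$.

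Orienting the four edges counterclockwise gives the following inward half-planes:
\[
  y\ge a(x+1),\qquad y\ge a(1-x),\qquad x+y\le1,\qquad y-x\le1 .
\]
Their intersection is the kite with vertices $(0,a)$ and $(0,1)$ together with
\[
  \Bigl(\pm\tfrac{1-a}{1+a},\ \tfrac{2a}{1+a}\Bigr).
\]
The side vertices come from intersecting $y=a(x+1)$ with $x+y=1$, and its mirror image. The kite's vertical diagonal has length $1-a$ and its horizontal diagonal has length $2(1-a)/(1+a)$. Half their product gives $\area{K(F_a)}=(1-a)^2/(1+a)$.

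Finally, write $f=1-a$, $k=(1-a)^2/(1+a)$ and $c=1$. The left side of \eqref{eq:C1} is
\[
  f(f-k)=(1-a)^2\Bigl(1-\tfrac{1-a}{1+a}\Bigr)=\frac{2a(1-a)^2}{1+a}.
\]
The right side is
\[
  2k(c-f)=2\cdot\frac{(1-a)^2}{1+a}\cdot a .
\]
These agree, so equality holds in \eqref{eq:C1}. This also shows that the coefficient $2$ cannot be increased for any $a\in(0,1)$.
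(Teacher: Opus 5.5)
Your proposal is correct. The hull and region areas, the four inward half-planes, the kite vertices $\bigl(\pm\tfrac{1-a}{1+a},\tfrac{2a}{1+a}\bigr)$ and the closing algebra all check out.

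Where you differ from the paper is the kernel step. You invoke the classical theorem that the visibility kernel of a simple polygon equals the intersection of its closed inward edge half-planes, and then intersect four half-planes. The paper deliberately avoids that theorem. Its proof says so, and \Cref{sec:formal} states that this agreement is used only in the final numerical section and is not formalized.

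Instead, the paper cuts $F_a$ along the diagonal $IT$ into the convex triangles $LIT$ and $IRT$. A point of one triangle sees all of that triangle. It sees the other triangle exactly when its segment to the far vertex $R$ (or $L$) crosses the diagonal within $[I,T]$. This gives $y\geq a(1-x)$ and $y\geq a(1+x)$ directly, and the two upper edges supply the remaining constraints.

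Your route is mechanical and works for any polygon. However, it rests on a general theorem whose reverse inclusion you only sketch. Your ``monotone angular sweep'' also degenerates exactly on the kite's boundary, which lies on edge lines and contains the polygon vertices $(0,a)$ and $(0,1)$. For the area this is harmless. It suffices that $K(F_a)$ contains the open kite, where every determinant is strictly positive and $z\in\intr F_a$, and lies in the closed kite. Alternatively, use the closedness of the kernel from \Cref{sec:preliminaries}.

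The paper's argument is specific to this quadrilateral but fully elementary and self-contained.
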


\begin{proof}
The first two area formulas follow directly by shoelace: $F_a$ is the hull
triangle with the triangular indentation of area $a$ removed.  The four
vertices will be denoted by $L,I,R,T$ in the displayed order.  The diagonal
$IT$ divides $F_a$ into the two convex triangles $LIT$ and $IRT$.  If
$z=(x,y)$ lies in the left triangle, then it already sees that entire
triangle; it sees the right triangle exactly when the segment $zR$ crosses
$IT$ at or above $I$.  This is the inequality $y\geq a(1-x)$.  The reflected
argument on the right gives $y\geq a(1+x)$.  The two upper edges require
$y\leq1+x$ and $y\leq1-x$.  Thus, without invoking a general kernel
half-plane theorem, the visibility conditions reduce exactly to
\[
  a(1+\lvert x\rvert)\leq y\leq1-\lvert x\rvert.
\]
Consequently $\lvert x\rvert\leq(1-a)/(1+a)$, and the kernel is a kite with
vertical diagonal $1-a$ and horizontal diagonal
$2(1-a)/(1+a)$.  Its area is therefore $(1-a)^2/(1+a)$, as displayed.  It
follows that
\[
  G(F_a)=\frac{1-a}{1+a},
  \qquad A(F_a)=1-a,
\]
and therefore
\[
  G(F_a)=\frac{A(F_a)}{2-A(F_a)}.
\]
Equivalently, both sides of \eqref{eq:C1} equal
$2a(1-a)^2/(1+a)$.  Since $k(c-f)>0$, replacing $2$ in
\eqref{eq:C1} by any larger universal coefficient fails on every member of
this family.
\end{proof}

Up to cyclic ordering, the displayed family $F_a$ is exactly Sibley's
Figure~21~\cite[Fig.~21]{Sibley}; Sibley already computes
$G(F_a)=(1-a)/(1+a)$ and $E(F_a)=1-a$.  His Figures~1(a) and~5 show a
different shallowly indented polygon, while Figure~14 is a many-vertex
parabolic family.  The proof above is retained to make the present argument
self-contained.  For the present theorem, Sibley's Figure~21 formulas
immediately give $G=E/(2-E)$, so this family produces equality in C1 and proves
sharpness of its coefficient two; no novelty is claimed for the family, the
formulas, or this algebraic substitution.

Nakano's theorem says
\[
  G\leq A,
  \qquad\text{equivalently}\qquad
  \area{F}\bigl(\area{F}-\area{K}\bigr)
  \geq \area{K}\bigl(\area{C}-\area{F}\bigr).
\]
Thus \eqref{eq:C1} doubles the coefficient on the hull-deficit term.  In the
odds coordinate $\tau(t)=(1-t)/t$, and when $G>0$ Nakano's statement is
$\tau(G)\geq\tau(A)$, whereas \eqref{eq:C1} is
\[
  \tau(G)\geq2\tau(A).
\]
When $G=0$, both inequalities are automatic and this odds notation is not
used.
Nakano's proof proceeds through a kernel-adapted anisotropic perimeter and a
Wulff-type inequality~\cite{Nakano}.  The argument here gives an alternative
route: it constructs the cap union $U$, rearranges its boundary into a convex
companion, and invokes only the planar mixed-area inequality.  We make no
priority claim about the underlying classical tools.

For a simple polygonal region $F$ with positive-area visibility kernel
$K=K(F)$, and with $C=\conv F$, the exact provenance of the short synthesis
can be displayed in one line.  Nakano's support estimate, invariance under edge
sorting, and the planar Wulff--Minkowski inequality give
\[
  \Per_K(F)\leq2\area{F},\qquad
  \Per_K(\Sor F)=\Per_K(F),\qquad
  \area{F}^{2}\geq\area{K}\area{\Sor F},
\]
while Siegel's singleton-cycle Theorem~2 gives
$\area{\Sor F}+\area{F}\geq2\area{C}$
\cite{Nakano,Siegel}.  Multiplying this inequality by $\area{K}$ and using
$\area{K}\area{\Sor F}\leq\area{F}^{2}$ gives the exact chain
\[
  2\area{K}\area{C}
  \leq\area{K}\bigl(\area{\Sor F}+\area{F}\bigr)
  \leq\area{F}^{2}+\area{K}\area{F},
\]
which is \eqref{eq:C1}.  The BBMP/Pach inequality alone gives only
$\area{C}\leq\area{\Sor F}$ and hence the factor-one bound.  Thus novelty is
claimed for the sharp visibility-kernel consequence, its general cap-union
and degenerate formulation, and the formalization---not for any component
inequality.

A referee may reasonably ask whether \eqref{eq:C1} already follows from
Nakano's own sandwich.  For a simple polygon whose kernel has positive area he
proves~\cite[Proposition~2]{Nakano} that
\[
  P_K(F)^{2}\leq G(F)\leq P_K(F),
  \qquad
  P_K(F)=\Per_K(K)/\Per_K(F),
\]
and this does not give \eqref{eq:ratio-form}.  Nakano's proof also gives
$P_K\leq\sqrt{GA}$.  Combining this with $G\leq P_K$ recovers only
$G\leq A$ (and, together with $P_K^2\leq G$, also $P_K\leq A$), not the
sharper bound $G\leq A/(2-A)$.  What the present proof
supplies, and what no rearrangement of the sandwich supplies, is the
\emph{hull} term: the companion inequality \eqref{eq:CVX} is where $\area{C}$
enters, and it is the source of the factor two.  Two further differences are
worth recording.  Nakano's sandwich presupposes $\area{K(F)}>0$, since
$P_K$ is undefined otherwise, whereas \eqref{eq:C1} includes empty and
zero-area kernels.  And he records no nonconvex equality case, writing that he
does ``not attempt to characterize all equality cases'', whereas
\Cref{prop:sibley-family} exhibits a one-parameter family of them.

It is important not to confuse two appearances of the number $2$.
The coefficient in \eqref{eq:C1} is the sharp \emph{area-deficit}
coefficient proved here.  Nakano's separate bound $G\leq2P_{\mathrm{per}}$
concerns the perimeter ratio
$P_{\mathrm{per}}(F)=\Per(C(F))/\Per(F)$ and leads to a different open
constant $\alpha^*$, discussed only in \Cref{sec:roadmap}.

\section{Formal verification and reproducibility}
\label{sec:formal}

The accompanying formalization uses Lean~4.33.0 and Mathlib~v4.33.0
\cite{LeanRelease,MathlibRelease}; for general background on the systems, see
\cite{Lean4,Mathlib}.  The reader-facing entry module is
\begin{center}
  \texttt{MathLab.P5.KernelDeficit}
\end{center}
at
\begin{center}
  \texttt{lean/MathLab/MathLab/P5/KernelDeficit.lean}.
\end{center}
The short reproduction guide is
\begin{center}
  \texttt{lean/MathLab/P5-KERNEL-DEFICIT.md}.
\end{center}
The maintained public repository is
\begin{center}
  \url{https://github.com/SilverAsh7/p5-kernel-deficit-lean}.
\end{center}
An earlier audited production-only public snapshot is commit
\begin{center}
  \texttt{64e81a503ee4d78875cb9cededdda18995f30007}.
\end{center}
A source archive of that historical snapshot is available at
\begin{center}
  \url{https://github.com/SilverAsh7/p5-kernel-deficit-lean/archive/64e81a503ee4d78875cb9cededdda18995f30007.zip}.
\end{center}
The proof-complete public \texttt{main} snapshot is commit
\begin{center}
  \texttt{e62a622ed1ff1a70f4a10a860b4088ee35ee6b68}.
\end{center}
Its source archive is available at
\begin{center}
  \url{https://github.com/SilverAsh7/p5-kernel-deficit-lean/archive/e62a622ed1ff1a70f4a10a860b4088ee35ee6b68.zip}.
\end{center}

The immutable source artifact of record for this deposit is
\begin{center}
  \href{https://doi.org/10.5281/zenodo.22062707}
  {\texttt{P5-kernel-deficit-lean-v6.zip}}.
\end{center}
Its SHA-256 is the concatenation of the following two lines:
\begin{center}
\texttt{6DD940DD393FDC2D9336C3D30C8E9C7B}\\[-0.2ex]
\texttt{F34A9DF3DC4E598731B869AD4A12E10D}.
\end{center}
The 1,587,378-byte archive contains 219 payload files, including 204 Lean
sources: the pinned toolchain and complete transitive source closure of the
production, research, and smoke-test aggregates, every imported scratch
module, and the source-provenanced vendored Schoenflies--Jordan dependency;
build caches are omitted.

The deposited archive and commit \texttt{e62a622} contain identical executable
Lean code but are intentionally not byte-for-byte identical.  The repository
additionally carries \path{.gitattributes} and
\path{.github/workflows/lean.yml}; its \path{README.md} and
\path{CITATION.cff} carry repository-specific presentation and citation
metadata; and five Lean source files differ only in comments.  Thus the
SHA-256 above identifies the exact deposited artifact, while the commit hash
identifies the exact public repository snapshot.

The separate numerical material cited in the final section is packaged as the
archive
\begin{center}
\href{https://doi.org/10.5281/zenodo.22062707}
{\texttt{kernel-deficit-v6-supplement.zip}}.
\end{center}
It is distributed with this preprint through the stable Zenodo concept record.
Its SHA-256 is the concatenation of the following two lines:
\begin{center}
\texttt{8FA9C2CE0A35FF67DB399E1D24F33EFD}\\[-0.2ex]
\texttt{40286369C83897824A8ABE3FD5E3BBB4}.
\end{center}
It contains all three polygon
vertex lists, the exact rational certificate data, and a minimal exact
verifier; it is independent of the Lean build.

\newpage
The public headline declarations are:
\begin{itemize}[leftmargin=2em]
  \item \texttt{MathLab.P5.S\_zero}, the real-valued cap-union theorem
        \eqref{eq:Szero};
  \item \texttt{MathLab.P5.c1}, the subtraction-free statement
        $2kc\leq f^2+kf$ for the literal enclosed region, its geometric
        visibility kernel, and its vertex hull; and
  \item
        \texttt{MathLab.P5.kernelDeficit\_dominates\_twice\_hullDeficit},
        the paper-facing deficit form \eqref{eq:C1}.
\end{itemize}
No nonempty-kernel or positive-kernel-area hypothesis appears in the polygon
theorems.

From \texttt{lean/MathLab/}, the reader-facing checks are
\begin{verbatim}
lake build MathLab.P5.KernelDeficit
lake env lean MathLab/P5/KernelDeficit.lean
lake build MathLab.Research
lake build
\end{verbatim}
At the completed v6 source snapshot, direct compilation passed, the 8,715-job
package-target build passed, the 8,855-job research-aggregate build passed,
and the full 8,721-job build passed.  The entry module prints the axioms of the
two polygon
declarations, and the exact output for each is
\begin{verbatim}
[propext, Classical.choice, Quot.sound]
\end{verbatim}
The same exact axiom set is reported for \texttt{MathLab.P5.S\_zero}.  There
is no \texttt{sorryAx} and no project-specific axiom in any of these public
theorem dependency closures.

The statement audit is part of the claim, not merely a software check.  In
particular, the formal polygon is a positive simple polygonal cycle; its
region is the literal bounded enclosed region; $K$ is its geometric visibility
kernel; $C$ is the convex hull of the listed vertices; all quantities are
ordinary planar Lebesgue areas; and empty or lower-dimensional kernels are
retained.  The subtraction-free statement is ring-equivalent to
\eqref{eq:C1}, without a sign or division side condition.

Three points on which such an audit is ordinarily taken on trust are here
discharged by machine, in \texttt{MathLab/P5/KernelDefinitionBridge.lean}.
The formal kernel $\{x:[x,y]\subseteq U\text{ for every }y\in U\}$ omits the
conjunct $z\in F$ carried by the definition of $K(F)$ in
\Cref{sec:introduction}; \texttt{c1\_paperKernel} proves the subtraction-free
form of \eqref{eq:C1} for the literal definition with that conjunct restored,
and the ring equivalence above gives the displayed deficit form, so nothing
turns on the omission.  \texttt{hull\_identity} discharges the identification, asserted in
\Cref{sec:preliminaries}, of the convex hull of a simple polygonal region with
the convex hull of its listed vertices.  And \texttt{nonvacuity} exhibits a
cycle whose region, hull, and kernel all have area $1/2$, at which
\eqref{eq:C1} holds with equality; since \eqref{eq:C1} is homogeneous of
degree two, this also pins the measure as ordinary planar area rather than a
rescaling of it.  Each reports the same three axioms.  In particular the
visibility kernel, and not the intersection of the inner half-planes of the
edge lines, is the object throughout: the two agree for a simple polygon, but
that agreement is a classical theorem which is not invoked in
Sections~1--8 and is not formalized in the Lean development.  The distinct
numerical certificate argument in the final section does invoke the theorem,
with its source stated there.

The paper proof and the formal proof have the same mathematical spine but do
not expose identical intermediate generalities, and it is worth stating
precisely which lemmas of \Cref{sec:rearrangement} are machine-checked.

\Cref{lem:sort-max}, which carries the factor of two, \emph{is}.  It is the
pair \texttt{edgeSortMax} and \path{exists_isCyclicCCWEdgeSort}, both
sorry-free with the same three axioms and both post-proof statement-audited.
The formal proof is independent of the one given above, proceeding by a signed
determinant sorting induction and a sorted-trace area identity rather than by
the Eulerian split and the limit.  The formal statement quantifies over
\emph{every} legal cyclic sorting permutation, retains zero edges in the
permuted edge multiset but leaves their positions unrestricted because the
sorting predicate ignores them, leaves codirected edges unconsolidated, and
reads $\area{\Sor(Q)}$ as the area
of the convex hull of the companion's partial-sum vertices; each of these
readings agrees with the text's.

\Cref{lem:companion}, the sorted-companion inequality \eqref{eq:CVX} in its
general form, is now machine-checked as
\texttt{PolygonRearrangement.cvx} in the explicit research extension
\path{MathLab/Geometry/PolygonRearrangementResearch.lean}.  Its general
positive-ear removal, positive-triangulation, area-identification, and virtual
hull inputs---blueprint B95, B94, and B91, leading to B87---are all
\emph{FORMALIZED+}: the kernel checks their proofs, their final statements have
passed post-proof audits, and each public declaration reports exactly
\texttt{propext}, \texttt{Classical.choice}, and \texttt{Quot.sound}.  Both
the production aggregate and the full \texttt{MathLab.Research} aggregate are
therefore admission-free.

Two scope boundaries are deliberate.  The B91 declaration
\path{exists_virtualHullBlockReversal} proves the existential certificate
actually consumed downstream: there is an edge permutation with the required
signed-area identity.  Its type does not claim that the witness permutation is
literally assembled from reversed hull-chain blocks, nor that the permuted
walk is simple.  Likewise, B94's \path{PositiveTriangulation} is an exact
positive area certificate: listed-vertex triangles cover the enclosed region,
overlap only on null sets, and telescope to shoelace area.  It is not a full
simplicial-complex triangulation with a face-count or incidence theorem.

The general formal proof imports the source-provenanced Schoenflies--Jordan
development vendored in the v6 archive; those sources are kernel-checked and
introduce no additional axiom.  This route is distinct from the repaired
informal proof above.  The manuscript's B114/B116 route derives the hull order
directly from polygonal-Jordan and crosscut facts in
\Cref{lem:hull-polygon,lem:crosscut,lem:hull-order}; the former unsupported
interleaved-arc/Jordan--Schoenflies step has been removed entirely.

Finally, the headline B12/C1 closure did not acquire a new dependency through
this repair.  It already used the audited strict-radial B99 certificate,
together with the formalized signed edge-sort maximum, polygonal
support/mixed-area bridge,
planar Brunn--Minkowski theorem, and compact export.  General B91, B94, and B95
were outside that recursive closure; their former gaps therefore limited only
the broader general rearrangement claim, not either main theorem.

\section{Discussion and exact status boundary}
\label{sec:discussion}

The main inequality is closed, but its structure leaves several natural
questions.

\begin{enumerate}[leftmargin=2em]
  \item \emph{Equality.}  For $\area{K(F)}>0$, identity
        \eqref{eq:monotone-transfer} shows that equality in C1 forces both
        $\area{F}=\area{U}$ and equality in $S^\circ$.  A complete
        reader-facing classification would require a substantially longer
        argument, which is neither reproduced nor claimed here.  Natural
        next targets are an explicit moduli or block-local normal form and
        equality and stability for the general cap-union theorem $S^\circ$.
  \item \emph{Stability.}  The proof has two visible deficits: the
        companion-area deficit and the mixed-area deficit.  Quantifying their
        interaction may give a stability version of C1, but no such theorem is
        claimed here.
  \item \emph{Dimension and infinite generators.}  Cyclic edge sorting is
        intrinsically planar and finiteness is used both in the polygonal
        boundary arrangement and in \eqref{eq:union-convergence}.  Neither a
        higher-dimensional analogue nor an arbitrary compact generator set is
        asserted.
\end{enumerate}

For clarity, the status boundary is as follows.  Theorems
\ref{thm:S-intro} and \ref{thm:C1-intro}, their degeneracy branches, and the
specialized Lean route just described are proved, kernel-checked, and
post-proof statement-audited; they additionally passed the automated referee
and citation-verification passes described in the declaration at the end of
the paper.  No complete equality-classification theorem beyond the displayed
consequences is claimed in this manuscript.  The explicit-moduli and
general-$S^\circ$ equality/stability questions remain outside its proved
scope.  Finite computations mentioned in
the next section do not enter the proof of either theorem.

\section{Roadmap to the exact optimum in P5}
\label{sec:roadmap}

\noindent\textbf{Separation from the proved results.}
Nothing in this section is used in the proofs of
\Cref{thm:C1-intro,thm:S-intro}.

This final section concerns Nakano's distinct perimeter problem.  Define
\[
  P_{\mathrm{per}}(F)=\frac{\Per(C(F))}{\Per(F)},
  \qquad
  \alpha^*=\sup_F\frac{G(F)}{P_{\mathrm{per}}(F)}.
\]
Nakano proved $1612/1575\leq\alpha^*\leq2$ and asked for the exact value
\cite{Nakano}.  The present state, with each status stated explicitly, is
summarized in \Cref{tab:alpha-status}.

The three exact certificates below compute the intersection of the closed
inward half-planes of the directed boundary edges.  For a counterclockwise
simple polygon, Nakano records that this intersection is exactly the
every-point visibility kernel~\cite[p.~2]{Nakano}.  Thus the computed kernel
area is exactly the numerator of $G$, not an auxiliary half-plane estimate.
This classical bridge is load-bearing only in this numerical section; the
proofs in Sections~1--8 remain in the visibility-kernel register throughout.

\begin{table}[ht]
\centering
\small
\begin{tabular}{@{}p{0.19\textwidth}p{0.73\textwidth}@{}}
\toprule
\textbf{Status} & \textbf{Statement or task} \\
\midrule
Proved; certificate independently recomputed
& An explicit $503$-gon with $250$ notches and integer coordinates bounded
  by $3\cdot10^9$ gives the rigorous exact-rational certificate
  \[
    \alpha^*>1.06107423573065456901,
  \]
  whose exact fraction has a $3390$-digit numerator.  Together with Nakano's
  theorem, the certified interval is
  $1.06107423573065456901<\alpha^*\leq2$.
  The image of that polygon under $\operatorname{diag}(10^{6},1)$ is again an
  explicit integer $503$-gon whose \emph{own} ratio $G/P_{\mathrm{per}}$ lies
  in a certified rational interval about $1.06107349321471418331$.  That
  value exceeds the earlier $403$-gon certificate
  $\alpha^*>1.06099297446846380587$, which is therefore exceeded by a
  single explicit polygon and not only through a compression limit, while
  falling short of the displayed lower end by $7.43\cdot10^{-7}$.  The
  earlier certificate remains valid and is superseded only in value.  All
  three vertex lists, exact fractions, and the exact verifier are in the
  accompanying \path{kernel-deficit-v6-supplement.zip}, available through the
  \href{https://doi.org/10.5281/zenodo.22062707}{Zenodo concept record}. \\
\addlinespace
Proved; compression proof below, finite lemma in companion note
& For every polygon $F$ and nonzero direction $u$, directional anisotropic
  compression gives
  $\alpha^*\geq \Phi_u(F)
   \defeq G(F)N_F(u)/N_C(u)$, where
  $N_R(u)=\sum_{e\in\mathcal E(R)}\lvert\inner{e}{u}\rvert$.
  The maximum over $u$ is attained at a normal to an edge of $F$ or of
  $C(F)$ and is therefore obtained by comparing finitely many candidates.
  Between consecutive edge normals its derivative has constant sign, namely
  the sign of $-\det(a,b)$ when
  $N_F/N_C=\inner{a}{u}/\inner{b}{u}$; for rational vertex data each candidate
  ratio $N_F(u)/N_C(u)$ is computed exactly in rational arithmetic.  This
  standard finiteness argument is not claimed as new.  The full proof is
  \path{paper/alpha-star-lower-bound.md}, \S3, Proposition~3; see also
  \Cref{prop:compression-lower}. \\
\addlinespace
Verified finite computation
& Exact rational certificates were produced for the optimized displayed combs
  at $k=1,\ldots,120$ and $k=160,200,250$; the values at $k=121,\ldots,159$
  and $k=161,\ldots,199$ were not computed.  Each certificate is a lower
  bound for the optimum of its own $k$-notch family rather than that optimum,
  since the underlying parameters come from a numerical search; the $123$
  certified values nevertheless increase strictly in $k$ under exact rational
  comparison.  Unrestricted polygon searches covered only $4\leq n\leq10$,
  and bounded two-sided and other adversarial families were also tested.
  Each explicit polygon supplies a valid lower bound; the absence of a better
  polygon in a finite search is evidence only. \\
\addlinespace
Conjectured
& The optimized comb values appear monotone and a continuum variational model
  suggests the comb-family limit
  $1.0614097453924$.  Neither monotonicity, global comb optimality, continuum
  convergence, nor that limiting value is proved.  Even a proof of this value
  would determine only the comb family, not $\alpha^*$. \\
\addlinespace
Required, currently unproved
& A genuinely global upper inequality or a monotone normal-form theorem must
  control arbitrary polygons.  The verified reductions below do not provide
  that missing estimate. \\
\bottomrule
\end{tabular}
\caption{Status separation for the exact-$\alpha^*$ program.}
\label{tab:alpha-status}
\end{table}

The FORMALIZED+ area theorem C1 does not determine $\alpha^*$: C1 compares
areas, while $P_{\mathrm{per}}$ contains boundary length.  No unproved
perimeter control is being smuggled into the formal result.

\subsection*{Verified reductions}

We record the short compression and slice arguments so their status is not
left to a roadmap label.

Nakano already used the vertical special case of this limiting compression on
his explicit pentagon to obtain $1612/1575$~\cite[\S7]{Nakano}.  The next
proposition gives the directionwise form; Proposition~R then identifies its
maximum with the full affine-orbit supremum.

\begin{proposition}[Directional compression]
\label{prop:compression-lower}
For every simple polygonal region $F$ and every $u\neq0$,
\[
  \alpha^*\geq\Phi_u(F)
  \defeq G(F)\frac{N_F(u)}{N_C(u)},
  \qquad
  N_R(u)=\sum_{e\in\mathcal E(R)}\lvert\inner{e}{u}\rvert.
\]
\end{proposition}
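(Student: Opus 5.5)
The plan is to realize $\Phi_u(F)$ as a limit of honest values of $G/P_{\mathrm{per}}$ along a one-parameter family of linear images of $F$ that flatten $F$ onto the $u$-axis. Normalize $u$ to be a unit vector, which changes neither $N_F(u)/N_C(u)$ nor the claim, and let $u^\perp$ be a unit vector orthogonal to it. For $t\in(0,1]$ define the invertible linear map
\[
  L_t(z)=\inner{z}{u}\,u+t\inner{z}{u^\perp}\,u^\perp .
\]
Since $\alpha^*$ is a supremum over simple polygonal regions and $L_t(F)$ is again one, we get $\alpha^*\geq G(L_tF)/P_{\mathrm{per}}(L_tF)$ for every $t>0$. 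It then suffices to show that this ratio tends to $\Phi_u(F)$ as $t\to0+$.

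\emph{Invariance of $G$.} Because $L_t$ is an affine bijection, it maps segments to segments. Hence $[z,y]\subseteq F$ if and only if $[L_tz,L_ty]\subseteq L_tF$, and so $L_t(K(F))=K(L_tF)$ directly from the definition of the visibility kernel. Both areas in $G$ are multiplied by $\lvert\det L_t\rvert=t$, so $G(L_tF)=G(F)$ for all $t$. This also covers the empty-kernel and null-kernel cases, where both sides vanish.

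\emph{Perimeter asymptotics.} Linear maps commute with convex hulls, so $C(L_tF)=L_t(C(F))$. The boundary edges of $L_tF$ and of $L_tC$ are the images of the edges of $F$ and of $C$. An edge vector $e$ goes to $L_te$, whose length is
\[
  \bigl(\inner{e}{u}^2+t^2\inner{e}{u^\perp}^2\bigr)^{1/2},
\]
and this decreases monotonically to $\lvert\inner{e}{u}\rvert$ as $t\to0+$. Summing over the finitely many edges gives $\Per(L_tF)\to N_F(u)$ and $\Per(L_tC)\to N_C(u)$. Hence
\[
  \frac{G(L_tF)}{P_{\mathrm{per}}(L_tF)}
  =G(F)\,\frac{\Per(L_tF)}{\Per(L_tC)}
  \longrightarrow G(F)\,\frac{N_F(u)}{N_C(u)}=\Phi_u(F).
\]

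\emph{Nondegeneracy of the limit.} The only point needing care, and the step I expect to be the (mild) obstacle, is that the limiting denominator is nonzero. I will use that $C$ has nonempty interior (\Cref{lem:hull-polygon}), so its boundary edges are not all orthogonal to $u$. Indeed, the closed edge cycle of $C$ projects onto the $u$-axis as a closed walk covering the width of $C$ in direction $u$. This gives $N_C(u)\geq2\bigl(h_C(u)+h_C(-u)\bigr)>0$, and the same projection argument gives $N_F(u)>0$. Passing to the limit in $\alpha^*\geq G(L_tF)/P_{\mathrm{per}}(L_tF)$ then yields $\alpha^*\geq\Phi_u(F)$.
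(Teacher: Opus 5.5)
Your proposal is correct and follows essentially the same route as the paper: compress by $L_t$ (the paper's $S_t$), get invariance of $G$ from $L_t(K(F))=K(L_tF)$, use the edgewise convergence $\lvert L_te\rvert\to\lvert\inner{e}{u}\rvert$, and check that the limiting denominator $N_C(u)$ is positive, which the paper identifies as twice the width of $C$ in direction $u$. Your closed-walk bound $N_C(u)\geq2\bigl(h_C(u)+h_C(-u)\bigr)$ is in fact an equality for convex $C$, and the positivity of $N_F(u)$ that you also check is not needed.
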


\begin{proof}
Normalize $u$ and let $S_t$ fix the $u$ direction while multiplying the
orthogonal direction by $t>0$.  Affine covariance of the visibility kernel
gives $G(S_tF)=G(F)$.  For every edge $e$,
$\lvert S_te\rvert\to\lvert\inner{e}{u}\rvert$ as $t\downarrow0$; hence
\[
 \frac{G(S_tF)}{P_{\mathrm{per}}(S_tF)}
 =G(F)\frac{\Per(S_tF)}{\Per(S_tC)}
 \longrightarrow \Phi_u(F).
\]
The denominator tends to $N_C(u)>0$, twice the width of $C$ in direction
$u$.  Every $S_tF$ is a genuine simple polygon, so taking the limit proves
the claim.
\end{proof}

\begin{proposition}[Compression completeness (R)]
\label{prop:compression-complete}
The directional reduction is exact:
\[
  \alpha^*=\sup_F\max_{u\neq0}\Phi_u(F).
\]
\end{proposition}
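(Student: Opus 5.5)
The inequality $\alpha^*\geq\sup_F\max_u\Phi_u(F)$ is \Cref{prop:compression-lower}, so only the reverse bound $\alpha^*\leq\sup_F\max_u\Phi_u(F)$ needs proof. For this it suffices to show that for every simple polygonal region $F$ there is a direction $u$ with
\[
  \frac{G(F)}{P_{\mathrm{per}}(F)}=G(F)\frac{\Per(F)}{\Per(C(F))}\leq\Phi_u(F)=G(F)\frac{N_F(u)}{N_C(u)}.
\]
When $G(F)=0$ both sides vanish, so assume $G(F)>0$. The claim then reduces to the purely metric statement
\[
  \max_{u\neq0}\frac{N_F(u)}{N_C(u)}\;\geq\;\frac{\Per(F)}{\Per(C)} .
\]
I would prove this by averaging over directions. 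For any vector $e$ one has $\int_{S^1}\lvert\inner{e}{u}\rvert\,du=4\lvert e\rvert$. Summing over edges gives $\int N_F(u)\,du=4\Per(F)$ and $\int N_C(u)\,du=4\Per(C)$.

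Since $N_C(u)>0$ for every unit $u$ ($C$ has nonempty interior, and $N_C(u)$ is twice the width of $C$ in direction $u$), set $m=\max_u N_F(u)/N_C(u)$. Then $N_F\leq mN_C$ pointwise, and integrating gives $\Per(F)\leq m\Per(C)$. Hence some $u$, namely the maximizer, satisfies $\Phi_u(F)\geq G(F)/P_{\mathrm{per}}(F)$. The maximum exists because the ratio is a continuous positive function on the circle. Taking suprema over $F$ gives $\alpha^*\leq\sup_F\max_u\Phi_u(F)$, and combined with \Cref{prop:compression-lower} this yields equality.

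I expect no step to be a serious obstacle. The points needing care are the following.
\begin{itemize}
\item The Cauchy-type identity $\int\lvert\inner{e}{u}\rvert\,du=4\lvert e\rvert$ is an elementary computation: $\int_0^{2\pi}\lvert\cos\theta\rvert\,d\theta=4$.
\item $N_C$ must be bounded away from zero, which holds because a simple polygon has a positive-area hull.
\item The paper's phrase ``full affine-orbit supremum'' suggests the authors intend $\max_u\Phi_u(F)$ to equal $\sup$ over affine images of $F$. My argument does not need that identification. If one wanted it, one would note that $G$ is affine invariant, so $\sup_F$ is unchanged when each $F$ is replaced by its affine orbit. The averaging argument, applied to each image, already gives the upper bound, and the compressions $S_t$ realize the lower bound.
\end{itemize}
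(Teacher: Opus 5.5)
Your proof is correct and follows essentially the paper's route: the averaging identity $\Per(R)=\frac14\int_{S^1}N_R(u)\,d\sigma(u)$ and the pointwise (mediant-type) bound $N_F\le mN_C$ give $G(F)/P_{\mathrm{per}}(F)\le\max_u\Phi_u(F)$, and \Cref{prop:compression-lower} supplies the reverse inequality. The only difference is that the paper applies the same bound to every linear image $TF$ (using $N_{TR}(v)=N_R(T^{\mathsf T}v)$ and $G(TF)=G(F)$) to identify $\max_u\Phi_u(F)$ with the affine-orbit supremum of $G/P_{\mathrm{per}}$; as you correctly note, this by-product is not needed for the displayed equality.
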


\begin{proof}
For any polygonal region $R$ with directed edge vectors $e$ and with
$d\sigma$ denoting arc-length measure on $S^1$, the elementary edgewise
identity
\[
 \Per(R)=\sum_e\lvert e\rvert
 =\frac14\int_{S^1}\sum_e\lvert\inner{e}{u}\rvert\,d\sigma(u)
 =\frac14\int_{S^1}N_R(u)\,d\sigma(u)
\]
follows from
$\int_{S^1}\lvert\inner{e}{u}\rvert\,d\sigma(u)=4\lvert e\rvert$.
This identity does not require $R$ to be convex.  For a fixed $F$, it and the
mediant inequality give, for every invertible linear map $T$,
\[
 G(TF)\frac{\Per(TF)}{\Per(TC)}
 \leq G(F)\max_{u\neq0}\frac{N_F(u)}{N_C(u)}.
\]
Indeed $G(TF)=G(F)$, while the displayed identity applies to both perimeters
and
$N_{TR}(v)=N_R(T^{\mathsf T}v)$.  Conversely the compressions in
\Cref{prop:compression-lower} make the left side tend to each $\Phi_u(F)$.
Thus $\max_u\Phi_u(F)$ is precisely the supremum of
$G(TF)/P_{\mathrm{per}}(TF)$ over the affine orbit of $F$.  Taking the
supremum over $F$, a class already closed under invertible affine maps, gives
the displayed identity.
\end{proof}

The reverse inequality proved in \Cref{prop:compression-complete} transports no
upper bound by itself: it identifies the optimized directional functional
with an affine-orbit supremum rather than reducing the class of polygons.

The first identity below is the polygonal specialization of Banach's
indicatrix formula (equivalently, the one-dimensional coarea formula) applied
to the $x$-coordinate of a boundary parametrization~\cite{Banach}.  We include
the edgewise proof for completeness.

\begin{proposition}[Slice identity (S)]
\label{prop:slice-identity}
Let $F$ be a simple polygonal region, let $e_1=(1,0)$, and let $\nu_F(s)$ be
the number of connected components of the vertical slice
$F\cap\{x=s\}$.  Then (the values at the finitely many exceptional vertex
abscissas do not affect the integral)
\[
  N_F(e_1)=2\int_{\R}\nu_F(s)\,ds.
\]
If $W>0$ is the horizontal projection width of $F$, then
\[
  N_C(e_1)=2W,
  \qquad
  \Phi_{e_1}(F)=\frac{G(F)}{W}\int_{\R}\nu_F(s)\,ds.
\]
In particular the last formula has no factor $W^{-1}$ when $W=1$.
\end{proposition}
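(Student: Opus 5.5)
The plan is to prove the identity edge by edge, using the definition $N_F(e_1)=\sum_e\lvert\inner{e}{e_1}\rvert$, which is the total horizontal variation of the boundary. For every abscissa $s$ that is not a vertex abscissa, the line $\{x=s\}$ meets $\partial F$ transversally in finitely many points. Each such point lies in the relative interior of a non-vertical edge. Vertical edges contribute $0$ to $N_F(e_1)$ and meet only finitely many lines, so they can be ignored.

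Write $c_F(s)$ for the number of boundary crossings on the line $\{x=s\}$. A Fubini-type computation over the edges gives
\[
  \int_{\R}c_F(s)\,ds=\sum_e\lvert\inner{e}{e_1}\rvert=N_F(e_1).
\]
This holds because a non-vertical edge $e$ is crossed exactly once by each vertical line whose abscissa lies in the open interval of length $\lvert\inner{e}{e_1}\rvert$ that $e$ projects onto. This step is the polygonal Banach indicatrix formula.

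It then remains to show that $c_F(s)=2\nu_F(s)$ for every non-exceptional $s$. The slice $F\cap\{x=s\}$ is a compact subset of a line, and it has finitely many boundary points, namely the crossing points. So it is a finite disjoint union of closed intervals. It has no isolated points, since transversal crossings lie in the relative interiors of edges, and the local two-sidedness fact puts interior points of $F$ on one side of each crossing.

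The main point to check is that each crossing point is an endpoint of exactly one slice interval, and that each interval endpoint is a crossing. Near a transversal crossing in the relative interior of an edge, a small disc is split by the edge into one half-disc in $\intr F$ and one in the exterior. The vertical line therefore passes from inside to outside there, so the point is an endpoint of exactly one component. Conversely, an endpoint of a slice component lies in $\partial F$, hence at a crossing. A nondegenerate interval has two distinct endpoints, so $c_F(s)=2\nu_F(s)$. Integrating gives $N_F(e_1)=2\int\nu_F$.

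For the hull, apply the same formula to $C=C(F)$. Every vertical slice of a convex set is a single interval for $s$ in the open projection interval, and is empty outside it, so $N_C(e_1)=2W$. Equivalently, a closed convex boundary traverses its width once in each direction. Substituting into $\Phi_{e_1}(F)=G(F)N_F(e_1)/N_C(e_1)$ yields the displayed formula, and the case $W=1$ is immediate. The only delicate step is the parity count $c_F=2\nu_F$, which relies on the half-disc/two-sidedness fact recorded before \Cref{lem:hull-polygon}.
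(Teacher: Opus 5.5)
Your proposal is correct and follows the paper's proof essentially step for step. Generic vertical lines cross $\partial F$ only in nonvertical edge interiors, the crossing count equals $2\nu_F(s)$ by entry/exit alternation (which you justify more carefully than the paper via the local two-sidedness fact), integrating edge by edge gives $N_F(e_1)$, and the same identity applied to the hull gives $N_C(e_1)=2W$. The only point worth making explicit is that applying the identity to $C$ uses that $C$ is itself a simple polygonal region, as supplied by \Cref{lem:hull-polygon}, with the same horizontal projection as $F$.
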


\begin{proof}
Away from the finite set of vertex abscissas, a vertical line meets only
nonvertical edge interiors.  Its boundary crossings alternate between entry
and exit along the line, so their number is $2\nu_F(s)$.  Integrating the
crossing count edge by edge gives
$\sum_e\lvert\Delta x_e\rvert=N_F(e_1)$, proving the first identity.  The
hull polygon supplied by \Cref{lem:hull-polygon} has one interval as its
generic vertical slice, so applying the same identity to $C$ gives
$N_C(e_1)=2W$.  Division and multiplication by $G(F)$ yield the last display.
\end{proof}

The first identity is deliberately G-free.  Its hull normalization follows
above from \Cref{lem:hull-polygon}, and multiplying the normalized identity by
$G(F)$ gives the displayed $\Phi$-form.  No internal project-status label is
needed for these consequences because their proofs are included here.

Both reductions were also subjected to counterexample searches before review,
including unrestricted polygons with $n\geq11$, nested and multiscale
notches, two-sided combs, several notched hull edges, and nonquadrilateral
hulls.  Those computations corroborate but do not prove the propositions.

\subsection*{Two proof branches}

With R and S now proved, a global proof still needs one of two genuinely
global inputs.

\begin{enumerate}[leftmargin=2em]
  \item \textbf{GLOBAL:} prove a universal upper inequality for the slice
        functional, with a sharp constant and all kernel degeneracies
        included; or
  \item \textbf{COMBIFICATION:} prove that every polygon can be transformed,
        without lowering the relevant objective, into a controlled comb or a
        rigorously specified broader normal form.
\end{enumerate}

Neither input is proved here, and neither is presently known to the author.  A
transformation that merely looks plausible on one-notch examples is especially
unsafe because repeated local defects contribute additively to the projected
boundary count.

The parallel finite-comb program has the following staged obligations:
\begin{description}[leftmargin=8.4em,style=nextline]
  \item[COMB-PARAM] give an exact parameter domain for every admissible
        $k$-notch comb, including simplicity and kernel combinatorics;
  \item[COMB-FORMULA] derive the exact rational objective throughout that
        domain;
  \item[K1-SEMIALG] solve the one-notch semialgebraic optimization exactly;
  \item[PENTAGON-MAX] prove, rather than infer from search, that the one-notch
        value is maximal among all pentagons.
\end{description}
The continuum branch then requires:
\begin{description}[leftmargin=10.8em,style=nextline]
  \item[CONT-DOMAIN] define and compactify the admissible continuum profile
        space;
  \item[DISCRETE-CONTINUUM] prove matching discrete-to-continuum bounds; and
  \item[CONT-OPT] solve the limiting variational problem with a certified
        optimizer and value.
\end{description}
The numerical target for that branch is the explicitly conjectural
functional
\[
 \Lambda_{\rm comb}
 =\sup_{(R,\phi)\in\mathcal D_{\rm comb}}
       \frac{2K_\infty(R,\phi)}{1+R}
       \bigl(1+S(R,\phi)\bigr),
\]
where
\begin{align*}
 S(R,\phi)
   &=\int_0^1\frac{dc}{R-1-\phi(c)-\phi(1-c)},\\
 K_\infty(R,\phi)
   &=\int_0^1
      \left(1+(R-1)x
        -2\max_{c\in[0,1]}(c+\phi(c)x)\right)_+\,dx.
\end{align*}
Here $\mathcal D_{\rm comb}$ denotes the as-yet-unformalized class of
admissible pairs $(R,\phi)$, and all occurrences of $c$ range over $[0,1]$.
The definition of that profile class, its boundary conditions, existence of an
optimizer, discrete-to-continuum convergence, and a rigorous upper enclosure
are all parts of the open problem; the displayed formula is not itself a
certification of the numerical value.

These steps would settle the comb family only.  Identifying their answer with
$\alpha^*$ still requires GLOBAL, COMBIFICATION, or a proof that a broader
family is extremal.  Accordingly, the only present numerical claim about the
global optimum is the certified interval displayed in
\Cref{tab:alpha-status}; $1.0614097453924$ is not asserted to be
$\alpha^*$.

\subsection*{Other remaining P5 work}

The exact-perimeter program is the largest open question, but it is not the
only unfinished structural work.  A complete P5 program would also:
\begin{enumerate}[leftmargin=2em]
  \item give a self-contained C1 equality classification and refine it into
        an explicit moduli or block-local normal form, and classify equality
        and prove quantitative stability for $S^\circ$;
  \item close the known finite-pocket boundary chambers, including alternate
        active-line orders and nontriangular hulls, and either prove or replace
        the currently conjectural medial-homothet pattern;
  \item treat nontriangular pockets and larger pocket counts without assuming
        the canonical square or triangle normal forms;
  \item if useful, strengthen the now-formalized general rearrangement
        certificates to a literal structural hull-block reversal and a full
        simplicial-complex triangulation.  Those stronger statements are not
        dependencies of the completed C1/$S^\circ$ formalization.
\end{enumerate}

\section*{Declaration of generative AI assistance}

The author used Claude (Anthropic) and ChatGPT/Codex (OpenAI) as research and
software-assistance tools for exploratory mathematics, Lean formalization,
manuscript organization, citation checking, and language editing.  The
AI-assisted workflow included separate referee and citation-verification
passes; these were automated checks, not human peer review.  Formalized claims
were additionally checked by the Lean kernel.  The author reviewed and
approved the final manuscript and accepts full responsibility for its
contents.

\end{document}